\newtheorem{teo}{Theorem}[section]
\newtheorem{defi}[teo]{Definition}
\newtheorem{pro}[teo]{Proposition}
\newtheorem{lem}[teo]{Lemma}
\newtheorem{rem}[teo]{Remark}
\newtheorem{exam}[teo]{Example}
\newcommand{\R}{\mathbb R}
\newcommand{\N}{\mathbb N}
\newcommand{\Z}{\mathbb Z}
\newcommand{\Q}{\mathbb Q}
\newcommand{\C}{\mathbb C}
\newcommand{\Si}{\mbox{Sing}}
\newcommand{\Co}{\mbox{Conv}}
\newcommand{\F}{\mathbb C(x_1,x_2)}
\begin{document}

\title{Nash modification on toric surfaces}
\author{Daniel Duarte}

\address{Universit\'{e} Paul Sabatier, Institut de Math\'{e}matiques de Toulouse, 118 route de Narbonne, F-31062
Toulouse Cedex 9}
\email{dduarte@math.univ-toulouse.fr}
\thanks{Research supported by CONACYT (M\'{e}xico)}
\keywords{Toric surface, Nash modification, combinatorial algorithm}
\date{}
\dedicatory{Dedicated to Heisuke Hironaka on the occasion of his 80th birthday}
\maketitle

\begin{abstract}
It has been recently shown that the iteration of Nash modification on not necessarily normal toric
varieties corresponds to a purely combinatorial algorithm on the generators of the semigroup associated
to the toric variety. We will show that for toric surfaces this algorithm stops for certain choices of
affine charts of the Nash modification. In addition, we give a bound on the number of steps required
for the algorithm to stop in the cases we consider. Let $\C(x_1,x_2)$ be the field of rational functions of a
toric surface. Then our result implies that if $\nu:\C(x_1,x_2)\rightarrow\Gamma$ is any valuation centered on
the toric surface and such that $\nu(x_1)\neq\lambda\nu(x_2)$ for all $\lambda\in\R\setminus\Q$, then
a finite iteration of Nash modification gives local uniformization along $\nu$.
\end{abstract}




\section*{\large Introduction}

We are interested in applying the Nash modification to not necessarily normal toric surfaces and finding out
whether or not the iteration of this process resolves their singularities. The Nash modification
of an equidimensional algebraic variety replaces singular points by limits of tangent spaces to non-singular
points. Following the work of Nobile (\cite{N}), Rebassoo showed in his thesis (\cite{R}) that the iteration of
Nash modification resolves the singularities of the family $\{z^p+x^qy^r=0\}\subset\C^3$, for any positive integers
$p,$ $q$, $r$ without a common divisor. In the context of normal toric varieties (over an algebraically closed field
of characteristic zero), Gonzalez-Sprinberg (\cite{GS-1}), and later Lejeune-Jalabert and Reguera (\cite{LJ-R}), have
exhibited an ideal, called the log-jacobian ideal, whose blowing-up is the Nash modification of the toric variety.
Using the work of Gonzalez-Sprinberg (\cite{GS-1}, \cite{GS-2}), and Hironaka (\cite{H}), Spivakovsky (\cite{Sp})
proved that iterating Nash modification composed with normalization resolves singularities of surfaces. More recently,
normalized Nash modification has appeared in the work of Atanasov et al. (\cite{At}). Moreover, it has been recently shown
by Gonz\'{a}lez Perez and Teissier in (\cite{GT}), and by Grigoriev and Milman in (\cite{GM}), that for the case of (not necessarily normal)
toric varieties of any dimension, the iteration of Nash modification can be translated into a purely combinatorial algorithm.

Here we follow the results proved in \cite{GM}. Let $\xi=\{\gamma_1,\ldots,\gamma_r\}\subset\Z^2$
be a set of monomial exponents of some toric surface $X$, i. e., $X$ is the Zariski closure in $\C^r$ of
$\{(x^{\gamma_1},\ldots,x^{\gamma_r})|x\in(\C^*)^2\}$, where $x^{\gamma_i}=x_1^ {\gamma_{i,1}}\cdot x_2^ {\gamma_{i,2}}$. Let
$S=\{\{i,j\}\subset\{1,\ldots,r\}|\det(\gamma_i\mbox{ }\gamma_j)\neq0\}\}$. Fix
$\{i_0,j_0\}\in S$ and let
\begin{align}
A_{i_0}(\xi)&=\{\gamma_k-\gamma_{i_0}|k\in\{1,\ldots,r\}\setminus\{i_0,j_0\},\mbox{ }\det(\gamma_k\mbox{ }\gamma_{j_0})\neq0\},\notag\\
A_{j_0}(\xi)&=\{\gamma_k-\gamma_{j_0}|k\in\{1,\ldots,r\}\setminus\{i_0,j_0\},\mbox{ }\det(\gamma_k\mbox{ }\gamma_{i_0})\neq0\}.\notag
\end{align}
Let $\xi_{i_0,j_0}=A_{i_0}(\xi)\cup A_{j_0}(\xi)\cup\{\gamma_{i_0},\gamma_{j_0}\}$ and
$S'=\{\{i,j\}\in S|(0,0)\notin\Co(\xi_{i,j})\}$, where $\Co(\xi_{i,j})$ denotes
the convex hull of $\xi_{i,j}$ in $\R^2$. Then it is proved in \cite{GM} (Section 4) that, if $(0,0)\notin\Co(\xi)$,
the affine charts of Nash modification of $X$ are given by the toric surfaces associated to the sets $\xi_{i,j}$
such that $\{i,j\}\in S'$. The iteration of this algorithm gives rise to a tree in which every branch corresponds
to the choices of $\{i,j\}\in S'$. A branch of the algorithm ends if the semigroup $\Z_{\geq0}\xi_{i,j}$ is generated by two elements.

We will prove the following result: Fix $L:\R^2\rightarrow\R$, $(x_1,x_2)\mapsto ax_1+bx_2$, where
$a,b\in\Z$ and $(a,b)=1$ (we allow $a=1$, $b=0$, and $a=0$, $b=1$), such that $L(\xi)\geq0$. Let
$\gamma_i$, $\gamma_j\in\xi$ be two elements such that $L(\gamma_i)\leq L(\gamma_k)$ for all $\gamma_k\in\xi$,
$L(\gamma_j)\leq L(\gamma_k)$ for all $\gamma_k\in\xi$ such that $\det(\gamma_i\mbox{ }\gamma_k)\neq0$ and such
that $\{i,j\}\in S'$. We say that $L$ chooses $\gamma_i$, $\gamma_j$, even though, as we will see later,
$\gamma_i$, $\gamma_j$ need not be uniquely determined by the above conditions. We will prove that the
iteration of the algorithm stops for all the possible choices of $L$. In addition, we give a bound (that depends
on $L$) on the number of steps required for the algorithm to stop. Of course, this result gives only some progress
towards the question of whether or not Nash modification resolves singularities of toric surfaces. 

Our result has the following interpretation in terms of valuations. Let $X$ be the affine toric surface determined by 
$\xi\subset\Z^2$ and let $\C(x_1,x_2)$ be its field of rational functions. Let $\nu:\C(x_1,x_2)\rightarrow\Gamma$ 
be any valuation centered on $X$ such that $\nu(x_1)\neq\lambda\nu(x_2)$ for all $\lambda\in\R\setminus\Q$. We will
see in Section 5 that these valuations determine linear transformations $L$ as above and such that the center of $\nu$ 
after successive Nash modifications belongs to one of the charts chosen by $L$. By the result, the branches determined 
by $L$ are finite and they end in a non-singular surface. This implies the following theorem:

\begin{teo}
Let $\nu:\C(x_1,x_2)\rightarrow\Gamma$ be any valuation centered on $X$ such that $\nu(x_1)\neq\lambda\nu(x_2)$ for all
$\lambda\in\R\setminus\Q$. Then a finite iteration of Nash modification gives local uniformization along $\nu$.
\end{teo}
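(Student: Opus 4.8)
The plan is to attach to $\nu$ a linear form $L$ of the kind appearing in the main result, to show that the center of $\nu$ on the iterated Nash modification of $X$ always lands in one of the charts that $L$ chooses, and then to quote the finiteness of the $L$-branch.

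To build $L$: since $\nu$ is centered on $X$ and the hypothesis says $\nu(x_1)$ and $\nu(x_2)$ are rationally dependent (the cases where one or both vanish being trivial), we may scale $(\nu(x_1),\nu(x_2))$ to a primitive vector $(a,b)\in\Z^2$, i.e.\ $\nu(x_1)=ac$, $\nu(x_2)=bc$ with $(a,b)=1$ and $c$ a positive element of the divisible hull of $\Gamma$. Put $L=ax_1+bx_2$; after possibly replacing $(a,b)$ by $(-a,-b)$ we have $L(\xi)\geq0$, because $x^{\gamma_i}$ lies in the coordinate ring of $X$, so $0\le\nu(x^{\gamma_i})=c\,L(\gamma_i)$ and $c>0$. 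As in \cite{GM} we keep the standing assumption $(0,0)\notin\Co(\xi)$, so that the affine charts of the Nash modification of $X$ are exactly the toric surfaces attached to the $\xi_{i,j}$ with $\{i,j\}\in S'$.

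The key step is to show $\nu$ follows a branch chosen by $L$. Since $X'\to X$ (the Nash modification) is proper and birational and $\nu$ is centered on $X$, $\nu$ is centered on some chart of $X'$, say the one given by $\xi_{i_0,j_0}$, $\{i_0,j_0\}\in S'$; relabel so that $L(\gamma_{i_0})\leq L(\gamma_{j_0})$. Because $\nu$ is nonnegative on the coordinate ring of that chart, which contains $x^{\gamma_k-\gamma_{i_0}}$ for every $k\notin\{i_0,j_0\}$ with $\det(\gamma_k\ \gamma_{j_0})\neq0$ and $x^{\gamma_k-\gamma_{j_0}}$ for every $k\notin\{i_0,j_0\}$ with $\det(\gamma_k\ \gamma_{i_0})\neq0$, and since $\nu(x^{\gamma_m})=c\,L(\gamma_m)$ with $c>0$, we get $L(\gamma_{i_0})\leq L(\gamma_k)$ whenever $\det(\gamma_k\ \gamma_{j_0})\neq0$ and $L(\gamma_{j_0})\leq L(\gamma_k)$ whenever $\det(\gamma_k\ \gamma_{i_0})\neq0$. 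I would then promote the first of these to hold for \emph{all} $\gamma_k\in\xi$: a hypothetical $\gamma_k$ with $L(\gamma_k)<L(\gamma_{i_0})$ cannot have $\det(\gamma_k\ \gamma_{j_0})\neq0$, so $\gamma_k$ is parallel to $\gamma_{j_0}$; as $(0,0)\notin\xi$ and $\det(\gamma_{i_0}\ \gamma_{j_0})\neq0$ this forces $\det(\gamma_k\ \gamma_{i_0})\neq0$, whence $L(\gamma_k)\geq L(\gamma_{j_0})\geq L(\gamma_{i_0})$, a contradiction. With $\{i_0,j_0\}\in S'$ this is exactly the statement that $L$ chooses $\gamma_{i_0},\gamma_{j_0}$, and along the way one checks $L(\xi_{i_0,j_0})\geq0$: indeed $L(\gamma_{i_0}),L(\gamma_{j_0})\geq0$, the elements of $A_{i_0}(\xi)$ have the form $\gamma_k-\gamma_{i_0}$ with $\gamma_{i_0}$ now globally $L$-minimal, and those of $A_{j_0}(\xi)$ have the form $\gamma_k-\gamma_{j_0}$ with $\det(\gamma_k\ \gamma_{i_0})\neq0$.

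Finally, set $X_1$ to be this chart. Then $(X_1,\xi_{i_0,j_0},L)$ again satisfies the standing hypotheses — $\nu$ centered on $X_1$, $L(\xi_{i_0,j_0})\geq0$, and $(0,0)\notin\Co(\xi_{i_0,j_0})$ since $\{i_0,j_0\}\in S'$ — so the argument repeats and yields toric surfaces $X=X_0,X_1,X_2,\dots$ with $X_{n+1}$ a chart of the Nash modification of $X_n$, $\nu$ centered on each, and every choice dictated by the \emph{fixed} $L$. Thus $(X_n)$ is a branch of the algorithm of \cite{GM} determined by $L$, so by the finiteness result of this paper it terminates after a number of steps bounded in terms of $L$, at a surface $X_N$ whose semigroup is generated by two (necessarily independent) elements; such a surface is isomorphic to $\C^2$, hence nonsingular. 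The composite $X_N\to\cdots\to X_0=X$ is a finite iteration of Nash modification, proper and birational, and the center of $\nu$ on the nonsingular $X_N$ is a regular point — that is, local uniformization along $\nu$. The main obstacle is the key step: converting ``$\nu$ is centered on a given Nash chart'' into ``$L$ chooses the corresponding pair''. The subtlety is that the definition of $L$ choosing $\gamma_i,\gamma_j$ demands $\gamma_i$ to be $L$-minimal over \emph{all} of $\xi$, whereas centering gives this only over the $\gamma_k$ satisfying one determinant condition; the parallel-vector argument, together with the freedom to interchange $i_0$ and $j_0$, closes that gap, and one must also keep track that each chosen chart inherits $L(\,\cdot\,)\geq0$ and $(0,0)\notin\Co(\,\cdot\,)$ — the latter precisely the content of $\{i,j\}\in S'$ — so that the description of the Nash charts from \cite{GM} persists at every stage.
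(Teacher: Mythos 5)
Your overall strategy is the paper's: attach a linear form $L$ to $\nu$, use properness of the Nash modification to place the center of $\nu$ in some chart $\xi_{i_0,j_0}$, show that centering forces $\{\gamma_{i_0},\gamma_{j_0}\}$ to be a pair that $L$ chooses in the sense of (B2), iterate, and invoke Theorem \ref{t. main theorem}. Your ``parallel vector'' argument promoting $L(\gamma_{i_0})\leq L(\gamma_k)$ from the $\gamma_k$ with $\det(\gamma_k\ \gamma_{j_0})\neq0$ to all of $\xi$ is correct and in fact makes explicit a step the paper's proof only asserts; that part is fine.

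There is, however, a genuine gap at the very first step: the claim that the hypothesis ``$\nu(x_1)\neq\lambda\nu(x_2)$ for all $\lambda\in\R\setminus\Q$'' makes $\nu(x_1)$ and $\nu(x_2)$ rationally dependent, so that $(\nu(x_1),\nu(x_2))=(ac,bc)$ for a primitive $(a,b)\in\Z^2$ and a single positive $c$. This is false. By Proposition \ref{p. classify groups} the value group may be $\Z^2_{lex}$, and a valuation with, say, $\nu(x_1)=(1,0)$, $\nu(x_2)=(0,1)$ satisfies the hypothesis vacuously (there is no real $\lambda$ at all with $\nu(x_1)=\lambda\nu(x_2)$), yet $\nu(x_1)$ and $\nu(x_2)$ admit no common ``scale'' $c$: from $\nu(x_1)=ac$ and $\nu(x_2)=bc$ one gets $b\,\nu(x_1)=a\,\nu(x_2)$, which forces $a=b=0$. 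So your construction of $L$ simply does not apply to the real-rank-$2$ valuations, which are squarely inside the scope of the theorem and are the content of case (ii) of the paper's proof. The repair is short but requires the classification of value groups: for $\Gamma=\Z^2_{lex}$ write $\nu(x_1)=(a,c)$, $\nu(x_2)=(b,d)$ and set $L(t_1,t_2)=at_1+bt_2$ (the leading components); then $(0,0)\leq_{lex}\nu(x^{\gamma})=(L(\gamma),T(\gamma))$ still yields $L(\gamma)\geq0$, and the lexicographic inequalities coming from centering on a chart again give the $L$-minimality conditions of (B2), after which your argument proceeds unchanged. You should also say explicitly (as the paper does) that the only remaining possibility for $\Gamma$, namely $\Z+\beta\Z$ with $\beta$ irrational and $\nu(x_1)/\nu(x_2)\notin\Q$, is exactly what the hypothesis excludes.
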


In other words, the problem of local uniformization of toric surfaces by iterating Nash modification remains open
only for the valuations $\nu$ of real rank 1 and rational rank 2, such that there exists $\lambda\in\R\setminus\Q$ such 
that $\nu(x_1)=\lambda\nu(x_2)$. I would like to mention that I have been informed that Pedro Gonz\'{a}lez Perez 
and Bernard Teissier have obtained a similar result for toric varieties of any dimension.

Finally, I would like to express my sincere gratitude to Mark Spivakovsky, whose constant support and guidance have been 
of great help to obtain the results presented here. Among other things, he guided me through the interpretation of the 
result in terms of valuations. I would also like to thank the referees for their careful reading and helpful comments that 
improved the presentation of the paper.




\section{\large The algorithm}

In this section we give an explicit description of the algorithm as stated in \cite{GM}
and of the concrete affine charts of the Nash modification of a toric surface that we will
follow.

\begin{defi}
Let $X\subset\C^r$ be an algebraic variety of pure dimension $m$. Consider the Gauss map:
\begin{align}
G:X\setminus\Si(X&)\rightarrow G(m,r)\notag\\
x&\mapsto T_xX,\notag
\end{align}
where $G(m,r)$ is the Grassmanian parameterizing the m-dimensional vector spaces in $\C^r$, and $T_xX$ is the
direction of the tangent space to $X$ at $x$. Denote by $X^*$ the Zariski closure of the graph of $G$. Call $\nu$ 
the restriction to $X^*$ of the projection of $X\times G(m,r)$ to $X$. The pair $(X^*,\nu)$ is called the Nash 
modification of $X$.
\end{defi}

We next define our main object of study.

\begin{defi}
Let $\xi:=\{\gamma_1,\ldots,\gamma_r\}\subset\Z^2$ such that $\Z\xi:=\{\sum_{k=1}^{r}\lambda_k\gamma_k|\lambda_k\in\Z\}=\Z^2$.
Consider the following monomial map:
\begin{align}
\Phi:(\C^*)&^2\rightarrow \C^r\notag\\
x=(x_1,x_2&)\mapsto (x^{\gamma_1},\ldots,x^{\gamma_r}),\notag
\end{align}
where $\C^*=\C\setminus\{0\}$, $x^{\gamma_k}:=x_1^{\gamma_{k,1}}\cdot x_2^{\gamma_{k,2}}$ for $k=1,\ldots,r$
and $\gamma_k=(\gamma_{k,1},\gamma_{k,2})$. Let $X$ denote the Zariski closure
of the image of $\Phi$. We call $X$ an affine toric variety and $\xi$ a set of monomial
exponents of $X$.
\end{defi}

It is known that $X$ is an irreducible surface that contains an algebraic group isomorphic
to $(\C^*)^2$ that extends to $X$ the natural action on itself (see \cite{St}, Chapter 13).
\\
\\
The following is a step-by-step description of the Nash modification algorithm for toric
surfaces as proved in \cite{GM} (Section 4):

\begin{itemize}
\item[(A1)]Let $\xi:=\{\gamma_1,\ldots,\gamma_r\}\subset\Z^2$ be a set of monomial exponents of some
toric surface $X$ such that $(0,0)\notin\Co(\xi)$.
\item[(A2)]Let $S:=\{\{i,j\}\subset\{1,\ldots,r\}|\det(\gamma_i\mbox{ }\gamma_j)\neq0\}\}$. Fix
some $\{i_0,j_0\}\in S$ and consider the sets
\begin{align}
A_{i_0}(\xi)&:=\{\gamma_k-\gamma_{i_0}|k\in\{1,\ldots,r\}\setminus\{i_0,j_0\},\mbox{ }
\det(\gamma_k\mbox{ }\gamma_{j_0})\neq0\},\notag\\
A_{j_0}(\xi)&:=\{\gamma_k-\gamma_{j_0}|k\in\{1,\ldots,r\}\setminus\{i_0,j_0\},\mbox{ }
\det(\gamma_k\mbox{ }\gamma_{i_0})\neq0\}.\notag
\end{align}
\item[(A3)]Consider $\xi_{i_0,j_0}:=A_{i_0}(\xi)\cup A_{j_0}(\xi)\cup\{\gamma_{i_0},\gamma_{j_0}\}$.
If $(0,0)\notin\Co(\xi_{i_0,j_0})$, then this set is a set of monomial exponents for one affine chart
of the Nash modi\-fication of $X$.

\begin{figure}[ht]
\begin{center}
\includegraphics{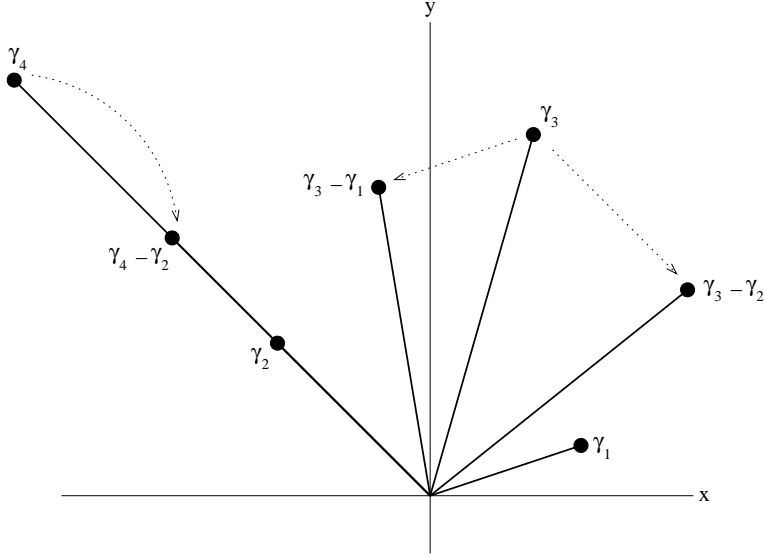}
\caption{Step (A3) of the algorithm for $\{1,2\}\in S$.}
\end{center}
\end{figure}

\item[(A4)]If the semigroup $\Z_{\geq0}\xi_{i_0,j_0}$ is generated by two elements then this
affine chart is non-singular and we stop. Otherwise, replace $\xi$ by $\xi_{i_0,j_0}$ and repeat
the process.
\end{itemize}

\begin{rem}
Notice that we can choose any set of generators $\xi'$ of the semigroup $\Z_{\geq0}\xi_{i_0,j_0}$ since
the resulting toric surfaces will be isomorphic. Moreover, if $\Z_{\geq0}\xi'=\Z_{\geq0}\xi_{i_0,j_0}$
then it is also clear that $\Z\xi'=\Z^2$. We say that $\xi'\subset\Z_{\geq0}\xi$ is a minimal set of
monomial exponents if $\xi'$ generates $\Z_{\geq0}\xi$ as a semigroup and for all $\gamma\in\xi'$,
$\gamma\notin\Z_{\geq0}(\xi'\setminus\{\gamma\})$.
\end{rem}

In this paper, we will only consider the elements of $S$ obtained in the following way:

\begin{itemize}
\item[(B1)]Fix any linear transformation $L:\R^2\rightarrow\R$, $(x_1,x_2)\mapsto ax_1+bx_2$, $a$, $b\in\Z$,
and $(a,b)=1$ (we allow $a=1$, $b=0$, and $a=0$, $b=1$), such that $L(\xi)\geq0$. We call $L(\gamma)$ the
$L-value$ of $\gamma$.
\item[(B2)]Let $\gamma_i$, $\gamma_j\in\xi$ be two elements such that $\{i,j\}\in S$, $L(\gamma_i)\leq L(\gamma_k)$ for all
$\gamma_k\in\xi$, $L(\gamma_j)\leq L(\gamma_k)$ for all $\gamma_k\in\xi$ such that
$\det(\gamma_i\mbox{ }\gamma_k)\neq0$, and such that $(0,0)\notin\Co(\xi_{i,j})$.
We say that $L$ \textit{chooses} $\gamma_i$ and $\gamma_j$.
\end{itemize}

\begin{rem}
For any $L$ satisfying $L(\xi)\geq0$, there exist $\gamma_i$, $\gamma_j\in\xi$ such that $(B2)$ holds. To see this, we consider 
four cases:
\begin{itemize}
\item[(1)]There exist two points $\gamma_1$, $\gamma_2$ such that $\det(\gamma_1\mbox{ }\gamma_2)\neq0$, $L(\gamma_1)<L(\gamma)$
for all $\gamma\in\xi\setminus\{\gamma_2\}$, and $L(\gamma_2)<L(\gamma)$ for all $\gamma\in\xi$ such that $\det(\gamma\mbox{ }\gamma_1)\neq0$.
Then $\gamma_1$, $\gamma_2$ satisfy $(B2)$.
\item[(2)]There exist at least one element of $L-$value 0. Among these points consider the one closest to the origin and call it
$\gamma$. Now consider the points in $\xi$ of lowest positive $L-$value. Among these points there is exactly one point $\gamma'$
such that $\gamma$, $\gamma'$ satisfy $(B2)$.
\item[(3)]$L(\xi)>0$ and there exist at least three elements $\gamma_1,$ $\gamma_2$, $\gamma_3$ such that 
$0<L(\gamma_1)=L(\gamma_2)=L(\gamma_3)\leq L(\gamma'),$ for all $\gamma'\in\xi\setminus\{\gamma_1,\gamma_2,\gamma_3\}$. Consider the
segment joining the points of $L-$value $L(\gamma_1)$. Then only the two couples consisting of one extremity of the segment and the point
next to it satisfy $(B2)$ (see figure \ref{f cases i ii}). 
\item[(4)]$L(\xi)>0$, there exists $\gamma\in\xi$ such that $0<L(\gamma)<L(\gamma')$ for all $\gamma'\in\xi\setminus\{\gamma\}$, and there 
are at least two elements $\gamma_1,$ $\gamma_2$, with both $\det(\gamma\mbox{ }\gamma_i)\neq0$ and such that 
$L(\gamma)<L(\gamma_1)=L(\gamma_2)\leq L(\gamma'),$ for all $\gamma'\in\xi$ such that $\det(\gamma\mbox{ }\gamma')\neq0$.
Then only the two couples consisting of $\gamma$ and one extremity of the segment joining the points of $L-$value $L(\gamma_1)$ satisfy 
$(B2)$ (see figure \ref{f cases i ii}).
\end{itemize}
\end{rem}

\begin{rem}
As we will see later, the choices of $L$ in $(B2)$ may not be unique (cf. lemma \ref{l cases i ii}). In addition, multiplying $L$ by a positive
constant does not modify its choices.
\end{rem}

\begin{exam}\label{e. ejemplos}
Let $\gamma_1=(1,0),\gamma_2=(2,1),\gamma_3=(0,2),\gamma_4=(0,3)$.
\begin{itemize}
\item[(A1)]Let $\xi=\{\gamma_1,\gamma_2,\gamma_3,\gamma_4\}\subset\Z^2$. Then
$S=\{\{1,2\},\{1,3\},\{1,4\},\{2,3\}\,\{2,4\}\}$.
\item[(B1)]Consider the following linear transformations:
\begin{itemize}
\item[(i)]$L_1(x,y)=y$.
\item[(ii)]$L_2(x,y)=\sqrt3 x+y$.
\end{itemize}
\item[(B2)]
\begin{itemize}
\item[(i)]$L_1$ chooses $\gamma_1$ and $\gamma_2$.
\item[(ii)]$L_2$ chooses $\gamma_1$ and $\gamma_3$.
\end{itemize}
\item[(A2)]For the choices $\{1,2\}$, $\{1,3\}$ we obtain, respectively:
\begin{itemize}
\item[(i)]$A_1(\xi)=\{\gamma_3-\gamma_1,\gamma_4-\gamma_1\}$,
$A_2(\xi)=\{\gamma_3-\gamma_2,\gamma_4-\gamma_2\}$.
\item[(ii)]$A_1(\xi)=\{\gamma_2-\gamma_1\}$, $A_3(\xi)=\{\gamma_2-\gamma_3,\gamma_4-\gamma_3\}$.
\end{itemize}
\item[(A3)]The resulting sets are, respectively:
\begin{itemize}
\item[(i)]$\xi_{1,2}=\{(-1,2),(-1,3)\}\cup\{(-2,1),(-2,2)\}\cup\{(1,0),(2,1)\}$.
\item[(ii)]$\xi_{1,3}=\{(1,1)\}\cup\{(2,-1),(0,1)\}\cup\{(1,0),(0,2)\}$.
\end{itemize}
\item[(A4)]The semigroups $\Z_{\geq0}\xi_{1,2}$, $\Z_{\geq0}\xi_{1,3}$ are generated by, respectively:
\begin{itemize}
\item[(i)]$\{(-2,1),(1,0)\}$. Therefore the algorithm stops for $L_1$.
\item[(ii)]$\{(0,1),(1,0),(2,-1)\}$. Replacing $\xi$ by $\xi_{1,3}$, we have to repeat the process for $L_2$. The algorithm
stops in the next iteration.
\end{itemize}
\end{itemize}
\end{exam}

What we intend to prove is that the algorithm stops for any choice of linear transformation
such that its kernel has rational slope or infinite slope. In other words, we will show that
in this case it is always possible to obtain a semigroup generated by two elements after
iterating the algorithm enough times.




\section{\large A first case}

In this section we study a first case of the problem stated in the previous section. Consider a
set of monomial exponents given by $\xi=\{(1,0),\gamma_1,\ldots,\gamma_r\}\subset\Z\times\Z_{\geq0}$.
We will iterate the algorithm following the choices of the linear transformation $L(x,y)=y$ and
we show that one eventually arrives to a semigroup generated by two elements (actually, those elements will
be $(1,0)$ and $(\lambda,1)$ for some $\lambda\in\Z$).
\\
\\
We intend to prove (always by following $L(x,y)=y$):
\begin{itemize}
\item[(1)]If $\xi=\{(1,0),(a_1,b_1),\ldots,(a_r,b_r)\}\subset\Z^2$ is such that
\begin{itemize}
\item[(i)]$\Z\xi=\Z^2$,
\item[(ii)]$b_i>1$ for all $i$,
\end{itemize}
then by iterating the algorithm we eventually arrive to an element of the form $(\lambda,1)$ which can be
taken by a linear isomorphism (that preserves $L$) to $(0,1)$.
\item[(2)]If $\xi=\{(1,0),(0,1),(-a_1,b_1),\ldots,(-a_r,b_r)\}$ is a minimal set of monomial exponents of
some toric surface where (necessarily, possibly after renumbering) $1\leq a_1<a_2<\ldots<a_r$ and $1<b_1<b_2<...<b_r$,
then by iterating the algorithm one eventually arrives to a semigroup generated by two elements.
\end{itemize}

Therefore, (1) implies that whenever $(1,0)\in\xi$ we can also suppose that $(0,1)\in\xi$, i. e.,
the situation in (2). 

\begin{rem}
The isomorphism that we will apply in (1) will be an element of $SL(2,\Z)$ that preserves $L$. The fact of being 
an isomorphism preserving $L$ guarantees that the algorithm is not modified. In addition, being an isomorphism 
guarantees that any relation among the elements of $\xi$ is preserved after applying it and no new relations appear. 
This means that the surfaces obtained after applying the Nash modification to isomorphic toric surfaces are also
isomorphic.
\end{rem}

\begin{lem}\label{l obtain 1}
For $\xi=\{(1,0),(a_1,b_1),(a_2,b_2),\ldots,(a_r,b_r)\}$ as in $(1)$, the iteration of the algorithm
eventually produces an element of the form $(\lambda,1)$, which can be taken by a linear isomorphism 
(that preserves $L$) to $(0,1)$.
\end{lem}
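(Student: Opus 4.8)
# Proof Proposal for Lemma 1.1

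The plan is to track the $L$-values (second coordinates) of the elements of $\xi$ under one step of the algorithm following $L(x,y) = y$, and to argue that the minimal positive $L$-value strictly decreases, or else that the configuration already contains an element of the form $(\lambda, 1)$. Since $(1,0) \in \xi$ always has $L$-value $0$ and (by hypothesis (ii) for the \emph{initial} set, or by minimality after renumbering) the other elements have positive $L$-value, the transformation $L$ chooses $\gamma_{i_0} = (1,0)$ together with an element $\gamma_{j_0} = (a_s, b_s)$ of lowest positive $L$-value among those $\gamma_k$ with $\det(\gamma_{i_0}\ \gamma_k) \neq 0$, i.e., with $b_s$ minimal and positive. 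First I would verify that $\{i_0, j_0\} \in S'$, that is, $(0,0) \notin \Co(\xi_{i_0,j_0})$: this should follow because all elements of $A_{i_0}(\xi) \cup A_{j_0}(\xi) \cup \{\gamma_{i_0}, \gamma_{j_0}\}$ still lie in a half-plane $\{y \geq 0\}$ (the differences $\gamma_k - \gamma_{j_0}$ have second coordinate $b_k - b_s \geq 0$, and $\gamma_k - \gamma_{i_0}$ has second coordinate $b_k > 0$), with $(1,0)$ on the boundary line but not reachable as a nonnegative combination of the others together with a genuinely positive-$y$ point — here one uses that $(1,0)$ and $(0,-b_s)$-type vectors cannot have the origin in their convex hull since $(1,0)$ generates a ray.

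The core computation is then: after the step, $\xi_{i_0,j_0}$ consists of $\{(1,0), (a_s, b_s)\}$, the vectors $\gamma_k - \gamma_{i_0} = (a_k - 1, b_k)$ for $k \neq i_0, j_0$ with $b_k \neq 0$ (all such, since $b_k > 0$ forces $\det((1,0)\ \gamma_k) = -b_k \neq 0$), and the vectors $\gamma_k - \gamma_{j_0} = (a_k - a_s, b_k - b_s)$ for those $k$ with $\det(\gamma_k\ (1,0)) = -b_k \neq 0$, i.e. again all of them. The new element of smallest positive $L$-value is $(a_s, b_s)$ itself (unchanged) unless some difference $\gamma_k - \gamma_{j_0} = (a_k - a_s, b_k - b_s)$ has $0 < b_k - b_s < b_s$. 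So I would split into two cases. Case A: some $b_k$ satisfies $b_s < b_k < 2b_s$; then the new minimal positive $L$-value is $b_k - b_s < b_s$, a strict decrease, and we induct. Case B: no such $b_k$ exists, so every $b_k$ with $b_k > b_s$ satisfies $b_k \geq 2b_s$. In this case I claim that after passing to a minimal generating set, the semigroup $\Z_{\geq 0}\xi_{i_0,j_0}$ still satisfies hypothesis (ii) relative to its own "$(1,0)$" — but now the minimal positive $y$-value has dropped, because the element $(a_s,b_s)$ together with $(1,0)$ may now generate elements like $(a_k - a_s, b_k - b_s)$ reducing things. The cleanest formulation: the quantity $\min\{b : (a,b) \in \xi,\ b > 0\}$ is a non-increasing positive integer under the step, and I must show it cannot stabilize forever unless some element has $b = 1$.

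To close the stabilization argument, suppose the minimal positive $L$-value equals a fixed $m$ through infinitely many steps. Then at each such step $\gamma_{j_0} = (a, m)$ for some $a$, and no difference produces a second coordinate in $(0, m)$; in particular every other $b_k$ is a positive multiple of... no, rather $b_k \geq 2m$ whenever $b_k \neq m$, and more carefully the differences $b_k - m$ are all $\geq m$ or $= 0$. One then examines the first coordinates: among elements of $L$-value exactly $m$, the step replaces each $(a_k, m)$ (for $a_k \neq a$) by $(a_k - a, 0)$, which is killed from the "positive $L$-value" part, while new elements of $L$-value $m$ arise only as $(a_\ell - a, b_\ell - b_{\text{prev}})$ with $b_\ell - b_{\text{prev}} = m$. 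I expect the main obstacle to be precisely this: showing that the set of elements of minimal positive $L$-value cannot regenerate indefinitely — one needs a secondary monovariant, presumably the number of such elements, or the spread of their first coordinates, together with the fact that $\Z\xi = \Z^2$ forces an element with $y$-coordinate coprime to $m$ to appear, eventually yielding $\gcd = 1$ in the $y$-direction at level... which by a further step produces an actual $(\lambda, 1)$. Once $(\lambda, 1) \in \xi$, the matrix $\begin{pmatrix} 1 & -\lambda \\ 0 & 1 \end{pmatrix} \in SL(2,\Z)$ fixes $(1,0)$, preserves $L(x,y) = y$, and sends $(\lambda,1) \mapsto (0,1)$, giving the conclusion.
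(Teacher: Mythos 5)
Your strategy---take $(1,0)$ as the first chosen point and run a Euclidean descent on the second coordinates---is the same as the paper's, but you stop exactly where something has to be proved. In your Case B and the ensuing ``stabilization'' discussion you explicitly leave open why the minimal positive $L$-value $m>1$ cannot persist forever, saying you ``expect'' a secondary monovariant is needed. The paper closes this with two observations. First, one may keep only one element of each $L$-value: if $(a,b)$ and $(a',b)$ both occur with $a<a'$, then $(a',b)=(a,b)+(a'-a)(1,0)$ lies in the semigroup generated by the others, so throughout the iteration one can maintain a generating set with distinct second coordinates $1<b_1<\cdots<b_r$. This reduction also disposes of your worry about elements of minimal positive $L$-value ``regenerating,'' and it is needed to make the choice of $\gamma_{j_0}$ and the condition $(0,0)\notin\Co(\xi_{i_0,j_0})$ unambiguous when several points share the minimal positive value (your convex-hull check is incomplete there: differences between two points of the same minimal positive $L$-value land on the line $y=0$ and can have negative first coordinate). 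Second, the set of second coordinates evolves by replacing $\{b_1,\ldots,b_r\}$ with a set containing $\{b_1,\,b_2-b_1,\ldots,\,b_r-b_1\}$, which preserves $\gcd=1$ (a consequence of $\Z\xi=\Z^2$). Since $b_1>1$ and the gcd is $1$, not every $b_i$ is a multiple of $b_1$, so after finitely many subtractions some $b_i-nb_1$ falls strictly between $0$ and $b_1$ and the minimum strictly drops; iterating, it reaches $1$.

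You do gesture at the gcd fact (``$\Z\xi=\Z^2$ forces an element with $y$-coordinate coprime to $m$\dots''), but the sentence trails off and is never turned into the termination argument; as written, your Case A does not recur provably and Case B is never shown to break. That is the genuine gap. The final step, applying $\left(\begin{smallmatrix}1&-\lambda\\0&1\end{smallmatrix}\right)$ to send $(\lambda,1)$ to $(0,1)$ while fixing $(1,0)$ and preserving $L$, is correct and matches the paper.
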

\begin{proof}
Since $\Z\xi=\Z^2$ we have $\gcd(b_1,b_2,\ldots,b_r)=1$ and we assume that $1<b_1<b_2<\cdots<b_r$. We
can assume this since if there were two points with the same $L-$value then one of them would be generated
by the other and some multiple of $(1,0)$. In addition, this property remains true after applying the algorithm
because the first choice of $L$ is always $(1,0)$.

Call $\gamma_0=(1,0)$ and $\gamma_i=(a_i,b_i)$. Then $L$ chooses $\gamma_0$ and  $\gamma_1$ and applying once
the algorithm we replace $\xi$ by $\xi_{0,1}$. As before, we consider a subset of $\xi_{0,1}$ that includes only
one element for every possible $L-$value (see figure \ref{f one elem for every L-value}). Of course, this is
a set of generators of $\Z_{\geq0}\xi_{0,1}$. Then we repeat the process taking into account this consideration.
Having this in mind, now it suffices to study the effect of the algorithm on the second coordinate.

\begin{figure}[ht]
\begin{center}
\includegraphics{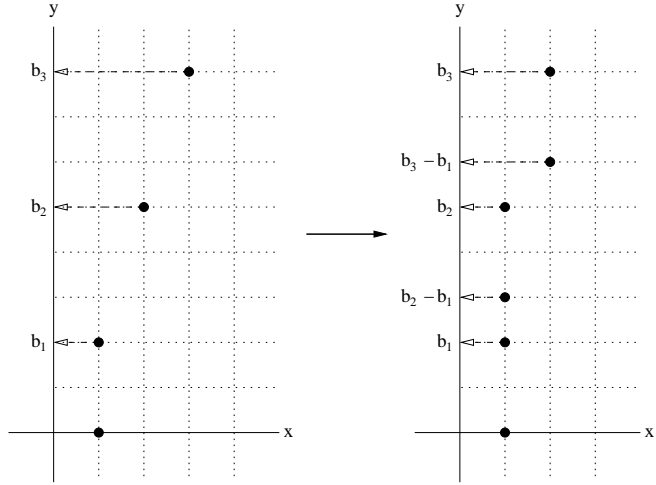}
\caption{One element for every possible $L-$value.\label{f one elem for every L-value}}
\end{center}
\end{figure}

We begin with integers $1<b_1<b_2<\cdots<b_r$ such that $\gcd(b_1,b_2,\ldots,b_r)=1$. After applying once
the algorithm we obtain a new set $\xi'$ such that the set of its elements' second coordinates contains the
subset $\{b_1,b_2-b_1,\ldots,b_r-b_1\}$. Since $\gcd(b_1,b_2-b_1,\ldots,b_r-b_1)=1$ we still have that the
greatest common divisor of the second coordinate of all points in $\xi'$ is 1. We repeat the algorithm until
we find some $n_1\in\N$ such that $b_2-n_1b_1\leq b_1$ and $b_2-(n_1-1)b_1>b_1$.

If $b_2-n_1b_1=b_1$ then $b_2$ is a multiple of $b_1$ and nothing happens. We keep repeating the algorithm until
we find some $n_2\in\N$ such that $b_3-n_2b_1\leq b_1$ and $b_3-(n_2-1)b_1>b_1$. Again, if $b_3-n_2b_1=b_1$ then $b_3$ is a multiple of $b_1$.
This situation cannot continue for all $b_i$ since $\gcd(b_1,b_2,\ldots,b_r)=1$. Therefore,
$b_i-nb_1<b_1$ for some $2\leq i\leq r$ and some $n\in\N$. At this moment, we have a new set $\xi'$ with some
element whose second coordinate is smaller than $b_1$ and such that the greatest common divisor of the second coordinate of all its elements is 1,
that is, we are in the same situation we began with.

Since all numbers involved are integers, this process will take us eventually to 1, that is, we will obtain
an element of the form $(\lambda,1)$, with $\lambda\in\Z$. Applying the linear isomorphism $T(x,y)=(x-\lambda y,y)$
we finally have $T(\lambda,1)=(0,1)$ and $T(1,0)=(1,0)$.
\end{proof}

Notice that when $r=2$ in the previous lemma the result of the algorithm on the second coordinate is precisely
Euclid's algorithm for $b_1$ and $b_2$. This observation directly implies the lemma in this case. We now proceed
to prove (2).

\begin{lem}
Let $\xi=\{(1,0),(0,1),(-a,b)\}$ where $a\geq1$ and $b>1$. Then the iteration of the algorithm eventually
produces a semigroup generated by two elements.
\end{lem}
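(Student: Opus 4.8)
The plan is to reduce the general statement to the case $a=1$ by running the algorithm and tracking what happens to the first coordinate. We begin with $\xi=\{(1,0),(0,1),(-a,b)\}$, $a\geq 1$, $b>1$, and follow $L(x,y)=y$. Here $L$ chooses $\gamma_i=(1,0)$ (the unique point of $L$-value $0$) and, among the points of lowest positive $L$-value, $\gamma_j=(0,1)$. Applying step (A2)--(A3) with $\{i,j\}$ corresponding to these two points, the set $A_{(1,0)}(\xi)$ contributes $(0,1)-(1,0)=(-1,1)$ and $(-a,b)-(1,0)=(-a-1,b)$, while $A_{(0,1)}(\xi)$ contributes $(1,0)-(0,1)=(1,-1)$ and $(-a,b)-(0,1)=(-a,b-1)$; together with $\{(1,0),(0,1)\}$. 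But $(1,-1)$ together with $(1,0)$ generates $(0,-1)$ as well, so after extracting a minimal generating set (Remark 1.6) and keeping one element per $L$-value as in the proof of Lemma \ref{l obtain 1}, the relevant data is again of the form $\{(1,0),(0,1),(-a',b')\}$ (up to a linear isomorphism preserving $L$, namely a shear $T(x,y)=(x-\lambda y,y)$). The key point is that the new pair $(a',b')$ satisfies $0<b'<b$ or, when $b'=b$, one has $a'<a$; in any case the quantity $(b,a)$ strictly decreases in the lexicographic order on $\N^2$.

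The main obstacle is checking this monotonicity claim carefully, because it depends on how $b$ and $a$ compare and on the convexity condition $(0,0)\notin\Co(\xi_{i,j})$ required in (B2). Concretely I would split into cases according to whether $b>a$, $b=a$, or $b<a$, and in each case compute which point survives as the new ``third'' generator after taking the minimal set of monomial exponents and shearing to normalize. When $b\geq 2a$, subtracting multiples of $(0,1)$ (which is exactly what iterating the algorithm does to the second coordinate, as in Lemma \ref{l obtain 1}) eventually brings the second coordinate below $a$ while the first coordinate stays $-a$; this is the analogue of the Euclidean-algorithm observation made after Lemma \ref{l obtain 1}. When $b<a$, the point $(1,-1)$ coming from $A_{(0,1)}(\xi)$ interacts with $(-a,b)$: after a shear the first coordinate drops in absolute value. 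The bookkeeping is routine once the cases are laid out, but one has to be attentive that after each shear $(1,0)$ and $(0,1)$ are preserved, so the hypotheses of the lemma are restored and the induction can proceed.

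Granting the monotonicity, the proof finishes by strong induction on the pair $(b,a)\in\N^2$ ordered lexicographically. The base of the induction is the case $b=1$: then $\xi=\{(1,0),(0,1),(-a,1)\}$, but $(-a,1)=(0,1)+a\cdot(-1,0)$ is not expressible in $\Z_{\geq 0}\{(1,0),(0,1)\}$ only if $a\geq 1$; however one checks directly that here $\Z_{\geq 0}\xi$ is generated by $(1,0)$ and $(-a,1)$ together with $(0,1)$, and applying the shear $T(x,y)=(x+ay,y)$ sends $(-a,1)\mapsto(0,1)$, so the semigroup is generated by the two elements $(1,0),(0,1)$ and the algorithm stops by (A4). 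The other terminal case is $a=1$, which is precisely the starting point of statement (2) with $r=1$; one verifies by a short direct computation (or by invoking the already-established $r=2$ reduction) that $\{(1,0),(0,1),(-1,b)\}$ leads, via the algorithm, to a semigroup on two generators — indeed the new third point has strictly smaller $b$, so this again feeds into the induction. Since the lexicographic order on $\N^2$ is well-founded, the iteration of the algorithm terminates, which is what we wanted to show.
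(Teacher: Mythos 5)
There is a genuine gap, and it occurs at the very first step. By definition, $A_{i_0}(\xi)$ only contains differences $\gamma_k-\gamma_{i_0}$ with $k\notin\{i_0,j_0\}$, so when $L$ chooses $(1,0)$ and $(0,1)$ the set $\xi_{i_0,j_0}$ does \emph{not} contain $(0,1)-(1,0)=(-1,1)$ nor $(1,0)-(0,1)=(1,-1)$; indeed, if both of these were present then $(0,0)$ would lie in $\Co(\xi_{i_0,j_0})$ and the chart would be discarded by (B2)/(A3). The correct result of one iteration is $\{(-a-1,b),\,(-a,b-1)\}\cup\{(1,0),(0,1)\}$. This undermines the core of your plan: this four-element set is already minimal (neither $(-a-1,b)$ nor $(-a,b-1)$ lies in the semigroup generated by the other three points, since that would require $(-1,1)$ or $(1,-1)$ to be a nonnegative combination of $(1,0)$ and $(0,1)$), and no shear preserving $L$ brings it back to the three-element shape $\{(1,0),(0,1),(-a',b')\}$. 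So the lexicographic induction on $(b,a)$ inside the three-element family cannot even take its first step, and the ``monotonicity claim'' you flag as the main obstacle is not merely delicate --- the normal form it presupposes does not exist.

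The paper's proof accepts that the set grows and tracks it explicitly: it shows by induction that after $n<b$ iterations the extra points are exactly $\delta_{n,i}=(-a-(n-i),\,b-i)$ for $i=0,\dots,n$ (each $\delta_{n-1,i}$ spawns $\delta_{n-1,i}-e_1=\delta_{n,i}$ and $\delta_{n-1,i}-e_2=\delta_{n,i+1}$), all lying on a line of slope $-1$. At $n=b-1$ the lowest of these points is $(-a,1)$, and since $-\tfrac1a\geq -1$ every $\delta_{b-1,i}$, as well as $(0,1)$, lies in $\Z_{\geq0}\{(1,0),(-a,1)\}$, so the semigroup is generated by two elements after $b-1$ steps. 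If you want to keep an induction flavor, the right quantity to induct on is not $(b,a)$ for a single extra point but the configuration of the whole fan of extra points; this is what Proposition~\ref{p first case} does when it reduces $r$ extra points to $r-1$ after $b_1-1$ steps. Your terminal-case discussion ($b=1$, which is outside the hypothesis $b>1$, and the claim that the first coordinate ``stays $-a$'') would also need to be discarded or reworked accordingly.
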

\begin{proof}
We prove by induction that after applying the algorithm $n$ times where $n<b$ one obtains:
$$\{\delta_{n,i}|i=0,1,\ldots,n\}\cup\{e_1,e_2\},$$
where $\delta_{n,i}:=(-a-(n-i),b-i)$, $e_1=(1,0)$, and $e_2=(0,1)$ (see figure \ref{f resulting set}). Let $n=1$.
Since $b>1$, $L$ chooses $e_1$ and $e_2$. Then the algorithm gives $\{(-a-1,b),(-a,b-1\}\cup\{e_1,e_2\},$ which is
precisely $\{\delta_{1,i}|i=0,1\}\cup\{e_1,e_2\}.$

\begin{figure}[ht]
\begin{center}
\includegraphics{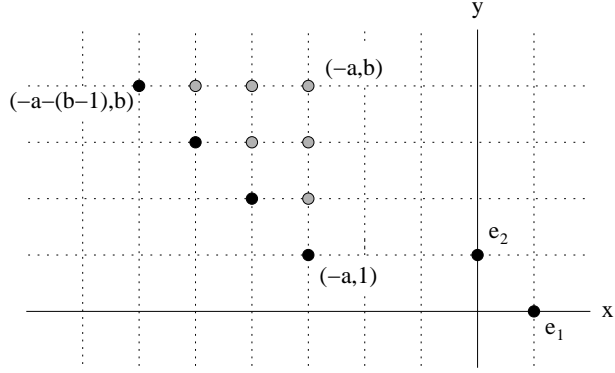}
\caption{The resulting set.\label{f resulting set}}
\end{center}
\end{figure}

Suppose that the statement is true for $n-1$. So, after applying the algorithm $n-1$ times, we obtain:
$$\{\delta_{n-1,i}|i=0,1,\ldots,n-1\}\cup\{e_1,e_2\}.$$
Since $n-1<b$, $L$ chooses again $e_1$ and $e_2$. Apply the algorithm again. Since $\det(\delta_{n-1,i}\mbox{ }e_1)\neq0$
and $\det(\delta_{n-1,i}\mbox{ }e_2)\neq0$ one takes $\{\delta_{n-1,i}-e_1|i=0,1,\ldots,n-1\}$ and $\{\delta_{n-1,i}-e_2|i=0,1,\ldots,n-1\}$.
But $\delta_{n-1,i}-e_1=\delta_{n,i}$ and $\delta_{n-1,i}-e_2=\delta_{n,i+1}$, which completes the induction.
In particular, for $n=b-1$ we obtain the set:
$$\xi'=\{(-a-(b-1),b),(-a-(b-2),b-1),\ldots,(-a,1)\}\cup\{e_1,e_2\}.$$
Notice that the points $(-a-(n-i),b-i)$ for $i=0,1,\ldots,n$ are all contained in
some line $l_n$ of slope -1, for each $n$. Now, since $-\frac{1}{a}\geq-1$, this implies, for $n=b-1$,
that every point in $\xi'$ is generated by $(-a,1)$ and $(1,0)$.
Therefore, after $b-1$ steps, the resulting semigroup is generated by two elements.
\end{proof}

\begin{pro}\label{p first case}
Let $\xi=\{(1,0),(0,1),(-a_1,b_1),\ldots,(-a_r,b_r)\}$, where $1\leq a_1<a_2<\ldots<a_r$ and
$1<b_1<\ldots<b_r$, be as in (2). Then the iteration of the algorithm eventually produces a semigroup generated
by two elements.
\end{pro}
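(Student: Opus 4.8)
The plan is to argue by induction on $r$, the base case $r=1$ being the preceding lemma (the case $r=0$, i.e. $\xi=\{(1,0),(0,1)\}$, being trivial). For the inductive step one follows the algorithm along the choices imposed by $L(x,y)=y$ and shows that after finitely many steps, up to an $L$-preserving isomorphism, one reaches a configuration of the form $(2)$ with at most $r-1$ points of $L$-value $>1$.

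First I would record the invariant that, as long as we use only the choice imposed by $L(x,y)=y$, every set produced by the algorithm is contained in $\{(1,0)\}\cup(\Z\times\Z_{\geq1})$; hence $(0,0)$ is never in its convex hull and the convex-hull conditions in (A3) and (B2) hold automatically. In such a set $(1,0)$ is the unique point of $L$-value $0$, and no descendant of $(-a_i,b_i)$ reaches $L$-value $1$ before step $b_i-1\geq b_1-1$, so $(0,1)$ stays the unique point of $L$-value $1$ through step $b_1-1$. Therefore at each of the first $b_1-1$ steps $L$ chooses $(1,0)$ and $(0,1)$, and the algorithm replaces every point $\gamma$ with $L(\gamma)\geq2$ by $\gamma-(1,0)$ and $\gamma-(0,1)$. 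An easy induction then gives that after $n\leq b_1-1$ steps the descendants of $(-a_i,b_i)$ are exactly the points $(-(a_i+n-b_i+v),v)$ with $b_i-n\leq v\leq b_i$, which lie on a line of slope $-1$.

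At step $n=b_1-1$ the descendants of $(-a_1,b_1)$ form the slope-$(-1)$ staircase with lowest point $(-a_1,1)$, which, exactly as in the preceding lemma and using $-1/a_1\geq-1$, lies in the cone generated by $(1,0)$ and $(-a_1,1)$. Hence $(0,1)=a_1(1,0)+(-a_1,1)$ and all descendants of $(-a_1,b_1)$ other than $(-a_1,1)$ become redundant generators. Applying the $L$-preserving isomorphism $T(x,y)=(x+a_1y,y)\in SL(2,\Z)$ (which fixes $(1,0)$, carries $(-a_1,1)$ to $(0,1)$ and the above cone onto the first quadrant), all descendants of $(-a_1,b_1)$ land in the closed first quadrant, hence are redundant, while for each $i\geq2$ the corresponding staircase is carried to a collinear family whose member at $L$-value $v+1$ equals its member at $L$-value $v$ plus $(0,1)+(a_1-1)(1,0)$; since $(0,1)$ and $(1,0)$ are now generators and $a_1-1\geq0$, all but the lowest member (which sits at $L$-value $b_i-b_1+1>1$) of this family are redundant. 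Discarding the redundant generators, renumbering so that $L$-values increase, and observing that on this new list a point is redundant whenever some point of strictly smaller $L$-value has first coordinate of at least as large absolute value, one is left with $\{(1,0),(0,1)\}$ together with at most $r-1$ points, a configuration of the form $(2)$. The induction hypothesis then finishes the proof; unwinding the recursion (the $b_1$-value at the $k$-th stage being $b_k-b_{k-1}+1$) shows that the algorithm stops after a number of steps bounded in terms of the $b_i$, of the order of $b_r+r$, which is the promised bound.

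The part I expect to be most delicate is precisely this bookkeeping: verifying that $L$ is forced to choose $(1,0)$ and $(0,1)$ at each of the first $b_1-1$ steps (in particular that no generator of the form $(-p,0)$, $p>0$, is produced before $T$ is applied, since such a generator would place the origin in a convex hull), and checking that after $T$ the descendants of $(-a_1,b_1)$ truly disappear while each of the remaining staircases collapses to a single essential generator. This last point is what makes the induction on $r$ well-founded.
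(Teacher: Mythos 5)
Your proof is correct and follows essentially the same route as the paper's: induction on $r$, running the algorithm $b_1-1$ times so that each $(-a_j,b_j)$ spreads into a slope-$(-1)$ staircase, using $-\tfrac{1}{a_1}\geq-1$ to see that $(1,0)$ and $(-a_1,1)$ generate everything below, and then applying $T(x,y)=(x+a_1y,y)$ to land in a configuration of type (2) with one fewer generator. Your extra bookkeeping (the forced choices of $L$ in the first $b_1-1$ steps and the explicit collapse of each staircase after $T$) is a more detailed verification of steps the paper delegates to the preceding lemma, not a different argument.
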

\begin{proof}
We proceed by induction on the number of elements of $\xi$. The case $r=1$ is given by the previous lemma.
Assume that the result holds for $r-1$. As in the previous lemma, after applying
the algorithm $b_1-1$ times every $(-a_j,b_j)$ gives rise to (see figure \ref{f resulting sets}):
$$\xi'_j:=\{(-a_j-(b_1-1-i),b_j-i)|i=0,1,\ldots,b_1-1\}.$$

\begin{figure}[ht]
\begin{center}
\includegraphics{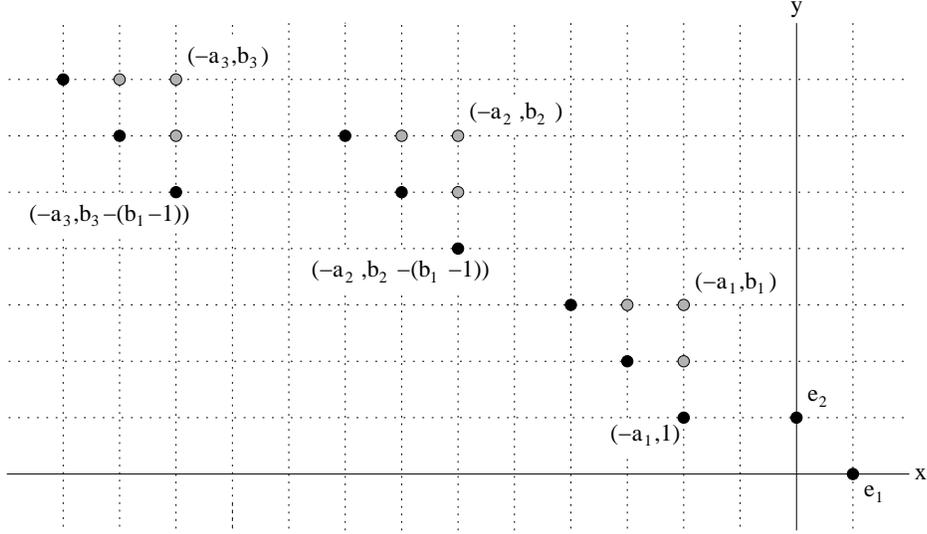}
\caption{The resulting sets.\label{f resulting sets}}
\end{center}
\end{figure}

As before, each $\xi'_j$ is contained in some line of slope -1. Therefore, since $-\frac{1}{a_1}\geq-1$,
every element in $\xi'_j$ is generated by $(-a_j,b_j-(b_1-1)),\mbox{ }(-a_1,1)$, and $(1,0)$ for each $j$.
Therefore, if $\xi'=\{(-a_i,b_i-(b_1-1))|i=2,\ldots,r\}\cup\{(-a_1,1),(1,0)\}$, we have
$$\bigcup_{j=1}^{r}\xi'_j\cup\{(1,0),(0,1)\}\subset\Z_{\geq0}\xi'.$$
Next, we consider the linear isomorphism $T(x,y)=(x+a_1y,y)$. Then we have
(since $T(-a_1,1)=(0,1)$),
$$T(\xi')=\{(1,0),(0,1),(-c_2,d_2),(-c_3,d_3),\ldots,(-c_r,d_r)\}.$$
Since $|\xi|=r+2$ and $|T(\xi')|=r+1$  we have, by induction, that the iteration of the algorithm over
$\xi$ eventually produces a semigroup generated by two elements.
\end{proof}

\begin{rem}
Notice that if $\xi=\{(-1,0),(a_1,b_1),\ldots,(a_r,b_r)\}$ then analogous results (1) and (2) for this set
can be reduced to the previous ones by considering the linear isomorphism $T(x,y)=(-x,y)$, since this isomorphism
preserves $L$.
\end{rem}




\section{\large Rational chart}

Consider any set of monomial exponents given by $\xi=\{\gamma_1,\ldots,\gamma_r\}\subset\Z^2$.
In this section we are going to prove that the iteration of the algorithm following $L:\R^2\rightarrow\R$,
$(x,y)\mapsto(ax+by)$ where $a$, $b\in\Z$ (which can be assumed to be relatively prime) and such that $L(\xi)\geq0$,
eventually produces a semigroup generated by two elements. To reach this goal, we intend to reduce this case
to the one already solved. Under these assumptions we can assume that $\xi\subset\Z\times\Z_{\geq0}$ and that $L(x,y)=y$
(it suffices to take the isomorphism $T(x,y)=(\beta x-\alpha y,ax+by)$, where $\alpha a+\beta b=1$).
\\
\\
We intend to prove (always by following $L(x,y)=y$):

\begin{itemize}
\item[(1)]If $\xi=\{\gamma_1,\ldots,\gamma_r\}\subset\Z^2$ such that $L(\gamma_i)>0$ for all $i$,
then by iterating the algorithm we eventually arrive to an element of the form $(n,0)$, with $n\in\Z$.
\item[(2)]If $\xi=\{(n,0),\gamma_1,\ldots,\gamma_r\}$ is a set of monomial exponents of
some toric surface with $n>0$, then the iteration of the algorithm eventually produces the point $(1,0)$.
\end{itemize}

\begin{lem}\label{l cases i ii}
If $\xi=\{\gamma_1,\ldots,\gamma_r\}\subset\Z^2$ such that $L(\gamma_i)>0$ for all $i$, then by
iterating the algorithm we eventually arrive to an element of the form $(n,0)$, with $n\in\Z$.
\end{lem}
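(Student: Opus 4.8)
The plan is to track the minimal $L$-value occurring in $\xi$ and show that, after finitely many iterations of the algorithm, it drops to $0$. Fix $\xi=\{\gamma_1,\dots,\gamma_r\}$ with $L(\gamma_i)=b_i>0$ for all $i$, ordered so that $b_1\le b_2\le\cdots\le b_r$ (as in the previous section, after collapsing points of equal $L$-value to a single representative using that the first choice is forced, we may even assume $0<b_1<\cdots<b_r$). Let $\gamma_1$ be a point of lowest $L$-value and let $\gamma_j$ be the partner that $L$ chooses: by Remark following (B2) such a pair exists, and $\gamma_j$ has the lowest $L$-value among the $\gamma_k$ with $\det(\gamma_1\ \gamma_k)\ne 0$. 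Applying (A2)--(A3) with $\{i_0,j_0\}=\{1,j\}$, the set $A_{\gamma_1}(\xi)$ consists of the vectors $\gamma_k-\gamma_1$ (for the relevant $k$), which have $L$-value $b_k-b_1$, and $A_{\gamma_j}(\xi)$ consists of vectors of $L$-value $b_k-b_j\le b_k-b_1$; together with $\gamma_1,\gamma_j$ themselves. So after one step the new minimal positive $L$-value is $\min(b_1,\ b_2-b_1,\ \dots)$ — exactly the behaviour already analysed in the proof of Lemma \ref{l obtain 1}.

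The key point is that the multiset of $L$-values evolves, under repeated application of the algorithm, by a process that strictly decreases the quantity $b_1+b_2$ (the sum of the two smallest positive $L$-values) — or, more robustly, decreases $b_1$ after finitely many steps whenever $b_1$ fails to divide every $b_k$. Indeed, as long as $b_1 \mid b_k$ for all $k$ the points of $L$-value $b_k$ are, after enough subtractions of $\gamma_1$, reduced to $L$-value exactly $b_1$ and contribute nothing new; but since $\Z\xi=\Z^2$ forces $\gcd$ of the $L$-values to be $1$ (here one uses that the whole configuration genuinely spans, not just the $b_i$), not all $b_k$ can be multiples of $b_1$, so some $b_k - nb_1$ with $0 < b_k-nb_1 < b_1$ appears. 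This is literally the Euclidean-type descent already used in Section 2: one replays that argument verbatim, the only difference being that here there is no distinguished vector $(1,0)$ on the line $L=0$, so the role of "the point of $L$-value $0$" is played by the partner $\gamma_j$ and we must check the choices of $L$ still force the subtraction pattern we want. Since the $b_i$ are positive integers strictly decreasing (infinitely often), after finitely many iterations the minimal $L$-value reaches $1$, and one more round produces a vector of $L$-value $0$, i.e. of the form $(n,0)$ with $n\in\Z$.

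The main obstacle is exactly this bookkeeping of \emph{which} pair $L$ chooses at each step and verifying that the chosen partner $\gamma_j$ behaves like the vector $(1,0)$ did in Section 2 — in particular that $\det(\gamma_1\ \gamma_j)\ne 0$ and $\det(\gamma_k\ \gamma_j)\ne 0$ for the relevant $k$, so that the sets $A_{\gamma_1}(\xi)$, $A_{\gamma_j}(\xi)$ actually contain the translates we need, and that the condition $(0,0)\notin\Co(\xi_{i,j})$ in $(B2)$ is not an obstruction (this is where the hypothesis $L(\xi)\ge 0$, giving $(0,0)$ on the boundary, is used, together with the case analysis in the Remark on existence of choices). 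Once this is in place, the descent on $\gcd$ and on the minimal $L$-value is routine and gives an explicit bound on the number of steps in terms of $b_1,\dots,b_r$, paralleling the bound in Lemma \ref{l obtain 1}.
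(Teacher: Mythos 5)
Your proposal has a genuine gap at its central step: the mechanism by which an element of $L$-value $0$ is actually produced. In one application of the algorithm the new elements are $\gamma_{i_0}$, $\gamma_{j_0}$ and differences $\gamma_k-\gamma_{i_0}$, $\gamma_k-\gamma_{j_0}$, so an element of $L$-value $0$ can appear \emph{only} when some $\gamma_k$ has the same $L$-value as one of the two chosen points. Your Euclidean descent on the minimal $L$-value never establishes that such a coincidence occurs; in particular the final assertion ``the minimal $L$-value reaches $1$, and one more round produces a vector of $L$-value $0$'' is simply false (a set with $L$-values $1,3,5$, all distinct, produces no $L$-value $0$ in the next step). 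Worse, your preliminary reduction ``collapse points of equal $L$-value to a single representative'' is not available here: in Section 2 that reduction depended on $(1,0)$ belonging to $\xi$, whereas in the present lemma all $L$-values are strictly positive, two points of equal $L$-value are not redundant, and they are precisely the configurations that generate $(n,0)$ --- collapsing them destroys the conclusion you are trying to prove. Finally, your proposed decreasing quantity ($b_1$, or $b_1+b_2$) is not shown to decrease: the two chosen points are retained in $\xi_{i_0,j_0}$, so the minimum can stagnate indefinitely.

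The paper's proof runs differently. It first isolates the two configurations, (i) at least three points of equal minimal positive $L$-value, and (ii) a unique minimal point together with at least two points of equal second-smallest relevant $L$-value, and observes that an element of $L$-value $0$ can arise only from one of them. Outside these cases it shows that the \emph{maximum} $L$-value strictly decreases at each step (every new element is either a chosen point, of strictly smaller value, or a difference $\gamma_i-\gamma_j$ with $L(\gamma_i)-L(\gamma_j)<L(\gamma_i)\le L(\gamma_r)$), so after finitely many iterations one either terminates or lands in case (i) or (ii). There, the condition $(0,0)\notin\Co(\xi_{i,j})$ from (B2) forces $L$ to choose an extremal pair among the points of equal $L$-value (ordering them by first coordinate), and the resulting differences $\rho_i-\rho_1$ (or $\rho_i-\rho_s$) have $L$-value $0$, yielding $(n,0)$. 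To repair your argument you would need to supply exactly these two ingredients: a monovariant that genuinely decreases (the maximum, not the minimum), and the convexity analysis of which pairs $L$ is permitted to choose when several points share an $L$-value.
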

\begin{proof}
First, notice that the choices of $L$ are not unique in the following cases (see figure \ref{f cases i ii}):

\begin{itemize}
\item[(i)]There exist at least three elements $\gamma_1,$ $\gamma_2$, $\gamma_3$
such that $$0<L(\gamma_1)=L(\gamma_2)=L(\gamma_3)\leq L(\gamma'),$$
for all $\gamma'\in\xi\setminus\{\gamma_1,\gamma_2,\gamma_3\}$.
\item[(ii)]There exists $\gamma\in\xi$ such that $0<L(\gamma)<L(\gamma')$ for all
$\gamma'\in\xi\setminus\{\gamma\}$ and there are at least two elements $\gamma_1,$ $\gamma_2$, with both
$\det(\gamma\mbox{ }\gamma_i)\neq0$ and such that $$0<L(\gamma)<L(\gamma_1)=L(\gamma_2)\leq L(\gamma'),$$
for all $\gamma'\in\xi$ such that $\det(\gamma\mbox{ }\gamma')\neq0$.
\end{itemize}

\begin{figure}[ht]
\begin{center}
\includegraphics{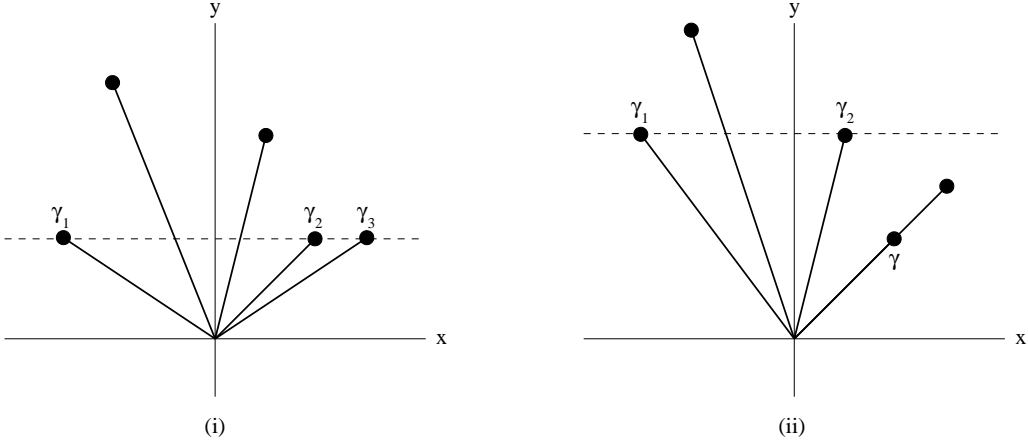}
\caption{Cases (i) and (ii).\label{f cases i ii}}
\end{center}
\end{figure}

In addition, an element of $L-$value 0 could be obtained only after being in one of the cases (i) or (ii).
Suppose first that we are not in any of the cases above, i.e. $\xi$ does not satisfy either (i) or (ii).
Now, let us suppose (possibly after renumbering) that $L(\gamma_i)\leq L(\gamma_r),$ for all $1\leq i\leq r$ 
and that $L$ chooses $\gamma_1$ and $\gamma_2$. Apply the algorithm once to obtain
$\xi'=\{\gamma'_1,\ldots,\gamma'_{r'}\}$. Since $\xi$ does not satisfy either (i) or (ii), we have
$0<L(\gamma'_i)$ for all $1\leq i\leq r'$. Once again, possibly after renumbering, we have $L(\gamma'_i)\leq L(\gamma'_{r'}),$ 
for all $1\leq i\leq r'$. Then, $\gamma'_{r'}=\gamma_i-\gamma_j$ for some $i>2$ and some $j\in\{1,2\}$, or $\gamma'_{r'}=\gamma_j$
for some $j\in\{1,2\}$. If $\gamma'_{r'}=\gamma_j$ then $L(\gamma'_{r'})=L(\gamma_j)<L(\gamma_{r})$. This inequality is strict since 
$\xi$ does not satisfy either (i) or (ii). If $\gamma'_{r'}=\gamma_i-\gamma_j$ for some $i>2$ and some $j\in\{1,2\}$ then
$$L(\gamma'_{r'})=L(\gamma_i)-L(\gamma_j)<L(\gamma_i)\leq L(\gamma_{r}).$$
If $\xi'$ does not satisfy either (i) or (ii) then we are in the same situation we begin with but now 
$L(\gamma'_{r'})<L(\gamma_{r})$. Since $L(\gamma)\in \N$ this situation cannot continue infini-tely many times. 
Therefore, either the resulting semigroup after some iteration of the algorithm is generated by two elements or
we arrive at one of the cases (i) or (ii).
\\
\\
So suppose we are in case (i). Let $k:=L(\gamma_1)=L(\gamma_2)=L(\gamma_3)$. Denote by
$\{\rho_1,\ldots,\rho_s\}$ all the elements of $\xi$ whose $L-$value is $k$. We can
suppose that $c_x(\rho_1)<c_x(\rho_2)<\ldots<c_x(\rho_s)$, where $c_x(\rho_i)$ denotes the
first coordinate of $\rho_i$. Under these assumptions, $L$ may choose only the couples
$\{\rho_1,\rho_2\}$ or $\{\rho_{s-1},\rho_s\}$. Indeed, let us suppose that $L$ chooses
$\{\rho_i,\rho_j\}$ different from $\{\rho_1,\rho_2\}$ and $\{\rho_{s-1},\rho_s\}$. If $s=3$, then
$\{\rho_i,\rho_j\}=\{\rho_1,\rho_3\}$. This implies that, after applying the algorithm, $c_x(\rho_2-\rho_1)>0$ and
$c_x(\rho_2-\rho_3)<0$ and then $(0,0)\in\Co(\rho_2-\rho_1,\rho_2-\rho_3)\subset\Co(\xi')\subset\R^2$,
where $\xi'$ is the resulting set after applying the algorithm. But according to (B2) of the
algorithm, we are supposed to choose only couples such that $(0,0)\notin\Co(\xi')$, that is, we have
a contradiction. If $s>3$, reasoning similarly we have the same conclusion. So let us suppose that
$L$ chooses the couple $\{\rho_1,\rho_2\}$. Applying the algorithm one more time will give us
$0<c_x(\rho_i-\rho_1)$, $0<c_x(\rho_i-\rho_2)$, $L(\rho_i-\rho_1)=0$, and $L(\rho_i-\rho_2)=0$
for all $i>2$. Since $s\geq3$ we have at least one element in the resulting set whose $L-$value
is 0, which in this case has the form $(n,0)$ with $n>0$. If $L$ chooses the couple $\{\rho_{s-1},\rho_s\}$
then we will obtain an element of the form $(m,0)$ with $m<0$.
\\
\\
Now suppose that we are in case (ii). Let $k:=L(\gamma_1)=L(\gamma_2)$. We denote
by $\{\rho_1,\ldots,\rho_s\}$ all the elements of $\xi$ whose $L-$value is $k$. Once again, we can
suppose that $c_x(\rho_1)<c_x(\rho_2)<\ldots<c_x(\rho_s)$. Reasoning as before $L$ chooses
$\gamma$ and could choose only $\rho_1$ or $\rho_s$. Let us suppose that $L$ chooses $\rho_1$.
Then $0<c_x(\rho_i-\rho_1)$ and $L(\rho_i-\rho_1)=0$ for all $i>1$ such that $\det(\rho_i\mbox{ }\gamma)\neq0$.
If $L$ chooses $\gamma$ and $\rho_s$ the result is analogous. Since $s\geq2$ we have at least one element in
the resulting set whose $L-$value is 0 which is what we wanted to prove.
\end{proof}

Now we proceed to prove (2).

\begin{lem}
If $\xi=\{(n,0),\gamma_1,\ldots,\gamma_r\}$ is a set of monomial exponents of some toric surface with $n>0$,
then the iteration of the algorithm eventually produces a point of the form $(\lambda,1)$, where $\lambda\in\Z$.
\end{lem}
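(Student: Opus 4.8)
The plan is to reduce this to (the argument in the proof of) Lemma~\ref{l obtain 1}. The key observation is that $(n,0)$, although it is not $(1,0)$, has $L$-value $0$, which is the smallest $L$-value possible since $L(\xi)\geq 0$; hence it plays exactly the role that $(1,0)$ played in Lemma~\ref{l obtain 1}. First I would check that at every stage $L$ must choose as its first element some point $(c,0)$ with $c>0$: such a point has $L$-value $0$, hence minimal, and its first coordinate must be positive because if a point $(c,0)$ with $c<0$ were also present then, a positive $x$-axis point being present too (this holds by induction, starting from $(n,0)$), the origin would lie on the segment between them and hence in $\Co(\xi)$, against the requirement in (A1). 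Since $\det((c,0),\gamma_k)\neq 0$ precisely when the second coordinate of $\gamma_k$ is nonzero, i.e. when $L(\gamma_k)>0$, the second element chosen by $L$ is then a point of minimal positive $L$-value, just as in Lemma~\ref{l obtain 1}. Finally $(c,0)$ is one of the two retained exponents $\gamma_{i_0},\gamma_{j_0}$, so it survives into $\xi_{i_0,j_0}$; thus every set produced by the algorithm again contains a positive point of the $x$-axis, and the situation perpetuates that of Lemma~\ref{l obtain 1}.

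Next I would note that, exactly as in Lemma~\ref{l obtain 1}, a single step of the algorithm acts on the second coordinates of the points of positive $L$-value by the Euclidean move ``replace the values $b_1<b_2<\cdots$ (among others) by $b_1,\ b_2-b_1,\ b_3-b_1,\dots$'', where $b_1$ is the minimal positive $L$-value. Since the algorithm preserves $\Z\xi=\Z^2$, projecting onto the second coordinate shows that the gcd of the second coordinates in play remains equal to $1$. Running the descent already carried out in the proof of Lemma~\ref{l obtain 1} --- subtract the current minimal positive value from the larger ones until some remainder drops strictly below it, using $\gcd=1$ to rule out stabilization at any value $m>1$ --- the minimal positive $L$-value reaches $1$ after finitely many steps, and at that moment there is a point of the form $(\lambda,1)$, $\lambda\in\Z$. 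I would also record the corner case in which the algorithm stops earlier: then the resulting semigroup is generated by two elements $u,v$ with $\Z\{u,v\}=\Z^2$, so $|\det(u,v)|=1$; since the semigroup contains a positive $x$-axis point, one of $u,v$ must have second coordinate $0$, and then $|\det(u,v)|=1$ together with $u$ and $v$ having nonnegative entries forces that one to be $(1,0)$ and the other to be $(\lambda,1)$. So a point of the form $(\lambda,1)$ is produced in every case.

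The one step that is not a mechanical transcription of Lemma~\ref{l obtain 1}, and the place I expect to need the most care, is the loss of the ``one point per $L$-value'' normalization used there: with $(c,0)$ in place of $(1,0)$, two points of equal $L$-value differ by a multiple of $(c,0)$ rather than of $(1,0)$, so neither need generate the other, and the set of exponents can genuinely grow. I would argue that this is harmless --- the algorithm still outputs a finite set at each stage, and the descent argument only uses the finite set of $L$-values together with the fact that their gcd is $1$ --- but it should be stated explicitly rather than silently absorbed into a reference to Lemma~\ref{l obtain 1}.
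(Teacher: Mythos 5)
Your argument is correct and follows essentially the same route as the paper's proof: use the convex-hull condition (B2) to force the first choice to be a point on the positive $x$-axis, observe that the second choice then has minimal positive $L$-value, and run the Euclidean descent of Lemma~\ref{l obtain 1} on the $L$-values using $\gcd=1$ from $\Z\xi=\Z^2$. Your extra remarks (the early-termination case and the loss of the one-point-per-$L$-value normalization) are sound refinements of points the paper leaves implicit.
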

\begin{proof}
Denote by $\{(n,0),\rho_1,\ldots,\rho_s\}$ the elements of $\xi$ whose $L-$value is 0 and
suppose that $0<n<c_x(\rho_i)$ for all $i$. Then $L$ first chooses $(n,0)$. Otherwise,
since $L(\rho_i)=0$ for all $i$, $L$ is forced to choose some of the $\rho_i$, and we
would have $c_x(\rho_i-(n,0))<0$ which contradicts condition (B2). Therefore $L$
chooses $(n,0)$. The other possible point should be then the one whose first coordinate
is the smallest among all the points in the next value of $L$.

Denote by $\{\sigma_1,\ldots,\sigma_t\}$ the elements of $\xi$ whose $L-$value
is greater than 0 and suppose that $0<L(\sigma_1)\leq L(\sigma_2)\leq\ldots\leq L(\sigma_t)$ and
that $L$ chooses $\sigma_1$. Since $\Z\xi=\Z^2$, we have $\gcd(L(\sigma_1), L(\sigma_2),\ldots,L(\sigma_t))=1$.
Apply once the algorithm. Then we obtain a new set $\xi'$ that contains the subset
$\{\sigma_1,\sigma_2-\sigma_1,\ldots,\sigma_t-\sigma_1\}$ (see figure \ref{f looking for lambda 1}).

\begin{figure}[ht]
\begin{center}
\includegraphics{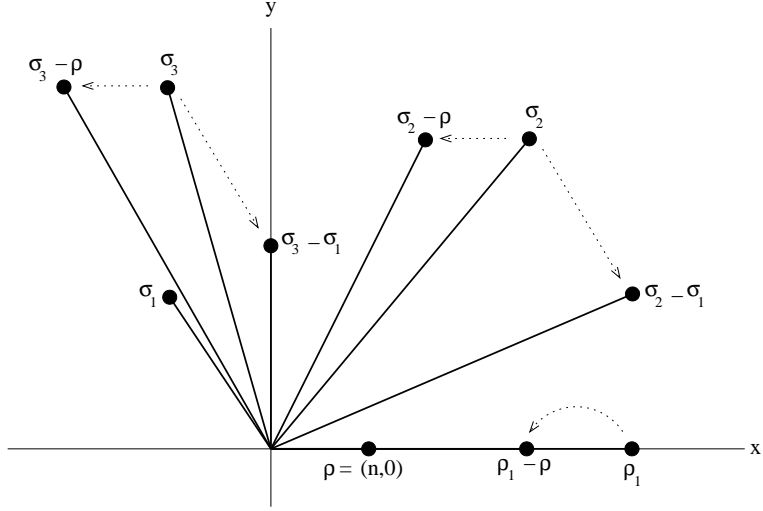}
\caption{Looking for $(\lambda,1)$.\label{f looking for lambda 1}}
\end{center}
\end{figure}

Since $\gcd(L(\sigma_1),L(\sigma_2)-L(\sigma_1),\ldots,L(\sigma_t)-L(\sigma_1))=1$,
we still have that the greatest common divisor of the $L-$values of all points in $\xi'$ is 1.
As we did in lemma \ref{l obtain 1}, we continue applying the algorithm until we have $L(\sigma_i)-mL(\sigma_1)<L(\sigma_1)$
for some $2\leq i\leq t$ and some $m\in\N$. At this moment, we have a new set of monomial exponents with some element whose
$L-$value is smaller than $L(\sigma_1)$ and such that the greatest common divisor of the $L-$values of all
its elements is 1, that is, we are in the same situation we began with. Continuing this way, we eventually
obtain the desired point. Once we get to some point (or points) whose $L-$value is 1, then the one with smallest
first coordinate is not generated by the others. As in lemma \ref{l obtain 1}, we can assume that this point is $(0,1)$.
\end{proof}

This lemma allows us to assume that $(0,1)\in\xi$. The next proposition shows that we can obtain some $(m,0)$
in the resulting set after applying the algorithm enough times such that $m<n$. Since there is always a point $(\lambda,1)$
at each step of the algorithm, we will have the same situation but with $m<n$. Continuing this
way we will eventually obtain the element $(1,0)$.

\begin{lem}\label{l rational chart}
Let $\xi=\{(n,0),(0,1),\gamma_1,\ldots,\gamma_r\}$ be a minimal set of monomial exponents of some toric surface,
where $n>0$. Then the iteration of the algorithm eventually produces the point $(1,0)$.
\end{lem}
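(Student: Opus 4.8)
The plan is to reduce $\xi$ to a set in which $(1,0)$ appears, by finding a chart where the $L$-value-$0$ point of smallest positive first coordinate gets strictly smaller. We already know from the preceding lemma that we may assume $(0,1)\in\xi$, and that at every subsequent step of the algorithm there is still a point of the form $(\lambda,1)$. So at each stage $\xi$ has the shape $\{(n,0),(0,1),\gamma_1,\ldots,\gamma_r\}$ with $L(x,y)=y$, and it suffices to show that after finitely many iterations the first coordinate $n$ of the unique minimal point on the line $y=0$ strictly decreases; since $n$ is a positive integer, iterating this finitely many times forces $n=1$, which is the claim.

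The key step is to analyze one application of the algorithm with the choice $\{i,j\}$ corresponding to $\gamma_i=(n,0)$ and $\gamma_j$ the chosen point of next-lowest $L$-value. First I would recall from the previous lemma that $L$ is indeed forced to choose $(n,0)$: any other point on $y=0$ has larger first coordinate, so choosing it would create a point with negative first coordinate and $L$-value $0$, violating $(0,0)\notin\Co(\xi_{i,j})$ together with condition (B2); and $(n,0)$ must be paired with the point of smallest first coordinate among those realizing the next $L$-value. Then I would track what happens to the points on $y=0$: writing $A_{i}(\xi)=\{\gamma_k-(n,0)\}$ and $A_{j}(\xi)=\{\gamma_k-\gamma_j\}$, the points staying on $y=0$ in $\xi_{i,j}$ are $(n,0)$ itself together with $\gamma_k-\gamma_j$ for those $\gamma_k$ with $L(\gamma_k)=L(\gamma_j)$. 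Using minimality of $\xi$ and the fact that $(0,1)\in\xi$ (so $\gamma_j-(0,1)$ considerations pin down first coordinates), I would argue that at least one such difference $\gamma_k-\gamma_j$ has first coordinate strictly positive and strictly less than $n$ — essentially because $\gamma_k$ and $\gamma_j$ share an $L$-value and $\gamma_k$ is not in the semigroup generated by $\gamma_j$ and $(1,0)$-type directions, forcing a genuinely new, smaller lattice point on the axis. One must also check that $(0,0)\notin\Co(\xi_{i,j})$ for this choice, so that it is a legitimate chart of the Nash modification.

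The main obstacle I expect is the last sharpening: guaranteeing not just that \emph{some} new point appears on $y=0$, but that the \emph{smallest-first-coordinate} such point is strictly below $n$, and that this descent is genuine (i.e., the minimal generating set does not quietly reintroduce a large axis point through some $\gamma_k-(n,0)$ with $L(\gamma_k)=0$, $\gamma_k\neq(n,0)$). This is where minimality of $\xi$ has to be used carefully: I would show that if $\rho$ were another point of $\xi$ on $y=0$, then $\rho-(n,0)$ lands on $y=0$ with first coordinate $c_x(\rho)-n$, and minimality plus the ordering $0<n<c_x(\rho)$ controls these so that after passing to a minimal generating subset of $\xi_{i,j}$ (allowed by the Remark after (A4)) the new minimal axis point is strictly smaller than $n$. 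Once the strict decrease $n'<n$ is established, the descent argument on $\N$ closes the proof, and combining with the two preceding lemmas and Proposition \ref{p first case} completes the reduction to the already-solved case where $(1,0)\in\xi$.
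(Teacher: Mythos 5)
There is a genuine gap, and it sits exactly where you flagged your ``main obstacle.'' Your mechanism for producing a new axis point with first coordinate below $n$ relies on (a) differences $\gamma_k-\gamma_j$ of points sharing the $L$-value of $\gamma_j$, and (b) differences $\rho-(n,0)$ of other axis points. But in the typical case neither source exists: take $\xi=\{(n,0),(0,1),(a,b)\}$ with $b$ large, $n\nmid a$, which is a minimal set with $\Z\xi=\Z^2$. One application of the algorithm (choosing $(n,0)$ and $(0,1)$) produces only $(a-n,b)$ and $(a,b-1)$ --- no new point on $y=0$ at all, and the minimal axis coordinate stays equal to $n$ for many iterations. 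Moreover, even when another axis point $\rho=(m,0)$ does exist, $\rho-(n,0)=(m-n,0)$ need not have first coordinate below $n$; minimality only gives $n\nmid m$, so you must run the Euclidean algorithm on the axis for several steps, and for that descent to end strictly below $n$ you need an axis point whose first coordinate is \emph{not divisible by} $n$ --- a fact your argument never establishes.

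The missing idea, which is the heart of the paper's proof, is arithmetic rather than convex-geometric: partition $\xi$ into $\xi_n=\xi\cap(n\Z\times\Z)$ and $\xi_0=\xi\setminus\xi_n$, and observe that as long as the algorithm chooses $(n,0)$ and $(0,1)$ (both in $n\Z\times\Z$), every new point is a difference $\gamma_k-\gamma$ with $\gamma\in n\Z\times\Z$, so membership in $n\Z\times\Z$ is preserved and the two blocks evolve independently (the normalizing isomorphism $T(x,y)=(x-\lambda y,y)$ must then be taken with $n\mid\lambda$ to keep this invariance). Since $\Z\xi=\Z^2$, some $(a,b)\in\xi_0$ exists; after $b-1$ iterations it descends to a point $(\lambda,1)$ with $n\nmid\lambda$, which at the next step yields $(m,0)$ with $n\nmid m$, and only then does the Euclidean descent on the axis provably drop below $n$. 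Your proposal contains neither the invariant congruence class modulo $n$ nor the tracking of an off-axis point of $\xi_0$ down to $L$-value $0$, so the claimed strict decrease of $n$ after ``finitely many iterations'' is not justified as written.
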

\begin{proof}
Suppose that $(n,0)$ has the smallest first coordinate among all elements of $L$-value 0.
We want to find another element whose $L-$value is 1 and whose first coordinate is not a multiple of $n$.
Let $\xi_n:=\xi\cap (n\Z\times\Z)$ and $\xi_0:=\xi\setminus\xi_n$. Since $\xi$ is minimal, we may assume that $(0,1)$
is the only element of $L-$value 1 in $\xi_n$. Then $L$ chooses $(n,0)$ and $(0,1)$. If $\xi'$ is the resulting set
after applying the algorithm once, we have $(\xi_n)'=\xi'\cap (n\Z\times\Z)$ and $(\xi_0)'=\xi'\setminus(\xi_n)'$. In
other words, the elements in $\xi_n$ only produce elements in $n\Z\times\Z$ and the elements outside of $\xi_n$ only produce
elements outside of $n\Z\times\Z$. Therefore, as long as $L$ keeps choosing $(n,0)$ and $(0,1)$, the effect of the algorithm
on $\xi_n$ is precisely what we saw in proposition \ref{p first case} (see figure
\ref{f first case induction}). In addition, the linear isomorphism we used in that proposition,
$T(x,y)=(x-\lambda y,y)$, does not change this property if $\lambda$ is a multiple of $n$ since, in this case,
$T(\gamma)\in n\Z\times\Z$ if and only if $\gamma\in n\Z\times\Z$. All this implies that the effect of the algorithm
on $\xi_0$ is independent of the effect on $\xi_n$.

\begin{figure}[ht]
\begin{center}
\includegraphics{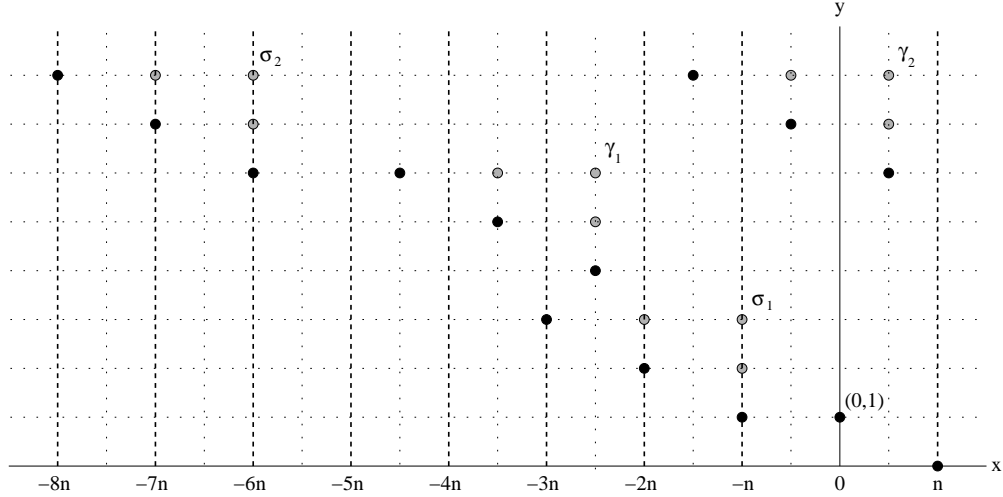}
\caption{$\sigma_1,$ $\sigma_2\in\xi_n$ and $\gamma_1,$ $\gamma_2\in\xi_0$.\label{f first case induction}}
\end{center}
\end{figure}

Now, since $\Z\xi=\Z^2$, there must exist some point $\gamma\in\xi$ such that $\gamma\notin n\Z\times\Z$.
Of all these possible elements we consider the one with smallest $L-$value and if there are several
such points, we take the one whose first coordinate is the smallest. Call this point $(a,b)$. We then apply the algorithm $b-1$ times.
If there is some point in $\xi_n$ whose $L-$value is smaller than $b$ then we will have to use the isomorphism $T(x,y)=(x-\lambda y,y)$ after
some iteration in order to obtain again the point $(0,1)$. As we said before, this does not change the evolution of the point $(a,b)$ or
its $L-$value. So, continuing this way, after these $b-1$ times, we obtain another element $(\lambda,1)$ different from $(0,1)$ and such 
that $\lambda$ is not a multiple of $n$.

At the next step, there will be some point $(m,0)$ different from $(n,0)$. If $m<n$ we finish. If not,
apply the algorithm again to obtain the point $(m-n,0)$. Continuing this way, since $m$ is not a multiple of $n$,
we eventually obtain some $(m',0)$ with $m'<n$. If in this process appears some other point $(p,0)$ such that $0<p<n$
or $n<p<m$ the conclusion is the same.
\end{proof}

\begin{rem}
Notice that if $\xi=\{(n,0),\gamma_1,\ldots,\gamma_r\}$ with $n<0$, then the analogous result (2) for this set can be reduced
to the case $n>0$ by considering the linear isomorphism $T(x,y)=(-x,y)$, since this isomorphism preserves $L$.
\end{rem}

Putting together the results (1) and (2) of this section and the previous one, we obtain that the iteration of the algorithm 
(A1) to (A4) subject to the rules (B1) and (B2) eventually stops.

\begin{teo}\label{t. main theorem}
Let $\xi\subset\Z^2$ be a set of monomial exponents of some toric surface. Then the iteration of the algorithm
following $L(x,y)=ax+by$, where $a$, $b\in\Z$, and $L(\xi)\geq0$, eventually produces a semigroup generated by two elements.
\end{teo}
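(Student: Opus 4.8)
The proof of Theorem \ref{t. main theorem} is essentially a matter of assembling the pieces already proved in Sections 2 and 3, so the plan is to reduce the general statement to the normalized situation treated there and then quote the relevant lemmas in the correct order. First I would use the linear isomorphism $T(x,y)=(\beta x-\alpha y,ax+by)$, where $\alpha,\beta\in\Z$ satisfy $\alpha a+\beta b=1$, which exists because $(a,b)=1$. This $T$ belongs to $SL(2,\Z)$, hence carries the set of monomial exponents $\xi$ to another set of monomial exponents of an isomorphic toric surface, and it sends the linear form $ax+by$ to the coordinate form $y$. By the remark on isomorphisms preserving $L$, applying the algorithm to $T(\xi)$ with $L(x,y)=y$ produces the same sequence of combinatorial steps (up to isomorphism) as applying it to $\xi$ with $L=ax+by$. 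Since $L(\xi)\geq 0$ we have $T(\xi)\subset\Z\times\Z_{\geq 0}$. Thus it suffices to prove the theorem in the case $L(x,y)=y$ and $\xi\subset\Z\times\Z_{\geq 0}$.

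Now I would split according to whether $\xi$ contains a point of $L$-value $0$. If every element of $\xi$ has strictly positive $L$-value, Lemma \ref{l cases i ii} guarantees that iterating the algorithm eventually produces an element of the form $(n,0)$ with $n\in\Z$; replacing $\xi$ by the resulting set, we are reduced to the case where $\xi$ contains such a point. If $n<0$ we apply the isomorphism $(x,y)\mapsto(-x,y)$, which preserves $L(x,y)=y$, so we may assume $n>0$. Then the lemma following Lemma \ref{l cases i ii} shows that the iteration produces a point $(\lambda,1)$, which after the isomorphism $T(x,y)=(x-\lambda y,y)$ (again preserving $L$) we may take to be $(0,1)$. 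At this stage $\xi$ is a minimal set of monomial exponents containing both $(n,0)$ and $(0,1)$, with $n>0$, and Lemma \ref{l rational chart} applies: the iteration eventually produces the point $(1,0)$.

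Once $(1,0)\in\xi$, I invoke Section 2. By Lemma \ref{l obtain 1} (the case $(1)$ there), after finitely many steps and a linear isomorphism preserving $L$ we obtain a set containing both $(1,0)$ and $(0,1)$; this is precisely situation $(2)$ of Section 2, so Proposition \ref{p first case} shows that iterating the algorithm further yields a semigroup generated by two elements. Unwinding the reductions, the original algorithm following $L(x,y)=ax+by$ terminates with a semigroup on two generators, which by step (A4) means the corresponding affine chart is non-singular and the branch stops.

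The content of the proof is not in this bookkeeping but in checking that the reductions are legitimate: namely that each isomorphism used is in $SL(2,\Z)$ (or at least $GL(2,\Z)$, as with $(x,y)\mapsto(-x,y)$), that it preserves the form $L(x,y)=y$, and hence that it commutes with the choices made by the algorithm, so that termination for the transformed data is equivalent to termination for the original data — this is exactly what the remark on isomorphisms preserving $L$ asserts. The only genuine subtlety, and the step I would be most careful about, is making sure the three phases can be chained: the output of Lemma \ref{l cases i ii} (a point $(n,0)$) feeds into the next lemma, whose output $(0,1)$ together with $(n,0)$ feeds into Lemma \ref{l rational chart}, whose output $(1,0)$ feeds into Lemma \ref{l obtain 1} and then Proposition \ref{p first case}; at each junction one must verify that the hypotheses (minimality of the generating set, positivity of the relevant first coordinate, $\Z\xi=\Z^2$) are still met, which they are because the algorithm and the isomorphisms preserve $\Z\xi=\Z^2$ and one may always pass to a minimal generating subset by Remark (the one defining minimal sets of monomial exponents) without changing the semigroup.
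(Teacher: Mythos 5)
Your proposal is correct and takes essentially the same route as the paper, which proves the theorem simply by noting that results (1) and (2) of Sections 2 and 3 combine after the same reduction to $L(x,y)=y$ via an element of $SL(2,\Z)$. Your chaining of Lemma \ref{l cases i ii}, the lemma following it, Lemma \ref{l rational chart}, Lemma \ref{l obtain 1}, and Proposition \ref{p first case} is precisely that assembly, and you correctly flag the only point requiring care, namely that the $L$-preserving isomorphisms make the hand-offs between the phases legitimate.
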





\section{\large Counting steps}

In this section we are going to prove some results regarding the number of iterations that the algorithm
needs to stop in the cases we already solved.
\\
\\
Let $\xi=\{\gamma_1,\ldots,\gamma_r\}\subset\Z\times\Z_{\geq0}$ be a set of monomial exponents of some
toric surface and consider $L(x,y)=y$. Let
\begin{align}
u_0(\xi):=\max\{L(\gamma_i)|&\gamma_i\in\xi\}\notag\\
u_1(\xi):=\min\{L(\gamma_i)|&\gamma_i\in\xi\mbox{, }\Z(\gamma_{j_0},\ldots,\gamma_{j_{s}})=\Z^2\mbox{ where }
\{\gamma_{j_0},\ldots,\gamma_{j_{s}}\}\mbox{ denotes}\notag\\
&\mbox{ the set of all } \gamma_{j_k}\mbox{ such that }0\leq L(\gamma_{j_k})\leq L(\gamma_i)\}\notag
\end{align}

Suppose that $L(\gamma_i)>0$ for all $i$ and denote by $\xi_k$ the resulting set after applying the algorithm
$k$ times. Then we have the following two lemmas:

\begin{lem}\label{l u_0 iterations}
Suppose that after $u_0(\xi)$ iterations of the algorithm we obtain an element of $L-$value 0 for the first time.
Then
\begin{itemize}
\item[(1)]$0\leq L(\xi_{u_0(\xi)})\leq1$.
\item[(2)]There exists some $\gamma\in\xi_{u_0(\xi)}$ such that $L(\gamma)=1$.
\item[(3)]There exist $\gamma_1,\ldots,\gamma_t\in\xi_{u_0(\xi)}$ such that $L(\gamma_i)=0$, $t\geq2$, and
such that $\gcd(c_x(\gamma_1),\ldots,c_x(\gamma_t))=1$, where $c_x(\gamma_i)$ denotes the first coordinate of $\gamma_i$.
\end{itemize}
\end{lem}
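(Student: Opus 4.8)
The plan is to trace the evolution of the $L$-values under the algorithm when $L(x,y)=y$ and to exploit the very specific way the algorithm subtracts the chosen minimal elements, together with the hypothesis that $u_0(\xi)$ iterations are needed to produce an element of $L$-value $0$ for the first time. The starting observation is that, since $L(x,y)$ is linear and the algorithm in step (A3) replaces $\xi$ either by differences $\gamma_k-\gamma_{i_0}$, $\gamma_k-\gamma_{j_0}$ or keeps $\gamma_{i_0},\gamma_{j_0}$, the $L$-values behave purely additively: $L(\gamma_k-\gamma_{i_0})=L(\gamma_k)-L(\gamma_{i_0})$, and $L(\gamma_{i_0})$ is the current minimum of the $L$-values (respectively $L(\gamma_{j_0})$ is the minimum among those $\gamma_k$ not proportional to $\gamma_{i_0}$). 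Thus, as long as no element of $L$-value $0$ has yet appeared, each iteration strictly decreases the minimal $L$-value present (by at least $1$, since all $L$-values lie in $\N$ and the chosen $\gamma_{i_0}$ has $L(\gamma_{i_0})\ge 1$ until it hits $0$). This is exactly the mechanism already used in Lemma \ref{l cases i ii}.

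For part (1): since we assume the first element of $L$-value $0$ appears precisely after $u_0(\xi)$ steps, I would argue that after $u_0(\xi)-1$ steps the minimal $L$-value present is still $\ge 1$; but the maximal $L$-value, which started at $u_0(\xi)$, can only decrease under the algorithm (a difference $\gamma_k-\gamma_{i_0}$ has $L$-value $L(\gamma_k)-L(\gamma_{i_0})\le L(\gamma_k)\le$ current max), and in fact decreases by exactly the current minimal value each time that minimum is subtracted from the maximal element, or stays put. The point is to show the maximal $L$-value has dropped to at most $1$ after $u_0(\xi)$ steps: combining the fact that the minimum drops by at least $1$ per step (so after $u_0(\xi)$ steps it would be $\le 0$, forcing an element of $L$-value $0$, consistent with the hypothesis) with the structural fact that at the step a $0$ first appears the set sits in a line of slope related to the previous minimum, one concludes $0\le L(\xi_{u_0(\xi)})\le 1$. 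Part (2) then follows because $\Z\xi_{u_0(\xi)}=\Z^2$ (this is preserved throughout, by the Remark after (A4)), so the $L$-values of $\xi_{u_0(\xi)}$ must generate $\Z$; given (1), the only way this can happen is if some element has $L$-value exactly $1$. Part (3) is the complementary statement: the elements of $L$-value $0$ in $\xi_{u_0(\xi)}$, say $\gamma_1,\dots,\gamma_t$, must exist ($t\ge 1$ since a $0$ appeared, and $t\ge 2$ because a single element $(n,0)$ together with elements of $L$-value $1$ cannot generate $\Z^2$ unless $n=\pm1$, in which case one checks directly the count still works, or because the way a $0$ is first produced in cases (i)/(ii) of Lemma \ref{l cases i ii} always yields at least two such elements $\rho_i-\rho_1$), and their first coordinates must have $\gcd$ equal to $1$, again forced by $\Z\xi_{u_0(\xi)}=\Z^2$ together with the fact that all other elements have $L$-value $\ge 1$: projecting the lattice condition onto the sublattice $\Z\times\{0\}$ reachable from the $L$-value $0$ part gives $\gcd(c_x(\gamma_1),\dots,c_x(\gamma_t))\mid$ every integer, hence equals $1$.

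The main obstacle I anticipate is making the bookkeeping in part (1) fully rigorous: one must rule out the possibility that the maximal $L$-value stalls (because the maximal element happens to be proportional to the current minimal-value element and so is not diminished at some step) for long enough that it exceeds $1$ after $u_0(\xi)$ iterations. Handling this requires a careful amortized count — essentially observing that whenever the maximal element is \emph{not} decreased at a given step, the structure of which couple $L$ chooses (governed by (B2) and the analysis of cases (i), (ii) in Lemma \ref{l cases i ii}) forces extra decrease at a nearby step, or else an element of $L$-value $0$ would have appeared earlier, contradicting the standing hypothesis. Once this amortization is set up correctly, parts (2) and (3) are short consequences of the invariant $\Z\xi_k=\Z^2$ and the description of how $0$-level elements are created.
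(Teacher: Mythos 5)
There are two genuine gaps. First, the engine you propose for part (1) is the claim that ``each iteration strictly decreases the minimal $L$-value present.'' This is false: the two elements chosen by $L$ are precisely those of \emph{minimal} $L$-value and they are \emph{retained} in $\xi_{i_0,j_0}$, so the minimum can persist indefinitely. For instance, with $\xi=\{(1,1),(1,2),(0,5)\}$ and $L(x,y)=y$ one obtains $\xi'=\{(-1,4),(-1,3),(1,1),(1,2)\}$, whose minimal $L$-value is still $1$. What actually decreases is the \emph{maximum}: every new element is a difference $\gamma_k-\gamma_{i_0}$ or $\gamma_k-\gamma_{j_0}$ with $L(\gamma_{i_0}),L(\gamma_{j_0})\geq1$, so it has $L$-value at most $u_0(\xi)-1$, and the retained elements are the minimal ones; hence $u_0$ drops by at least one \emph{unless} a retained element itself realizes the maximum, i.e.\ unless every element not proportional to $\gamma_{i_0}$ already has $L$-value $u_0(\xi)$. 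You correctly identify this stalling as ``the main obstacle,'' but you then defer it to an unspecified ``amortized count,'' which is exactly the step that is missing. No amortization is needed: in the stalling configuration either there are two non-proportional elements of value $u_0(\xi)$, and then $A_{j_0}$ contains an element of $L$-value $0$ at the very next step (too early, contradicting the hypothesis), or all elements but one are proportional to $\gamma_{i_0}$, and then the algorithm performs a subtractive Euclidean algorithm along that ray and terminates without ever producing a value-$0$ element, again contradicting the hypothesis. With that, $u_0$ decreases by at least one at each of the first $u_0(\xi)-1$ steps, so $\xi_{u_0(\xi)-1}$ consists entirely of elements of $L$-value $1$, and one further step yields (1) and (2).

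Second, your argument for the $\gcd$ in part (3) does not work: the condition $\Z\xi_{u_0(\xi)}=\Z^2$ alone does \emph{not} force the first coordinates of the value-$0$ elements to be coprime. The set $\{(2,0),(4,0),(0,1),(1,1)\}$ generates $\Z^2$ and has all $L$-values in $\{0,1\}$, yet $\gcd(2,4)=2$; projecting the relation $(1,0)=\sum\lambda_i(c_i,0)+\sum\mu_j(a_j,1)$ only gives that the $c_i$ together with the \emph{differences} $a_j-a_{j'}$ generate $\Z$. To conclude one must use the explicit form of $\xi_{u_0(\xi)}$: writing $\xi_{u_0(\xi)-1}=\{(a_1,1),\ldots,(a_r,1)\}$ with $a_1<\cdots<a_r$ (and $r\geq3$, since a two-element set with all values $1$ and $\Z$-span $\Z^2$ is already a semigroup generated by two elements, so the algorithm would stop without producing a zero --- this is also what gives $t\geq2$), the value-$0$ elements produced are exactly the differences $(a_i-a_1,0)$ and $(a_i-a_2,0)$ for $i\geq3$ when $L$ chooses $(a_1,1),(a_2,1)$. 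From $\Z\xi_{u_0(\xi)-1}=\Z^2$ one gets $\gcd(a_2-a_1,\ldots,a_r-a_1)=1$, and since $a_2-a_1=(a_i-a_1)-(a_i-a_2)$, the first coordinates of the produced value-$0$ elements are indeed coprime. Your deductions of (2) from (1) via the lattice condition is fine, but (1) and (3) as proposed are not established.
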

\begin{proof}
Recall that an element of the form $(n,0)$ is produced only after being in one of the cases (i) or (ii)
of lemma \ref{l cases i ii}. The hypothesis means that only after $u_0(\xi)-1$ iterations we arrive to one of
these cases. Since after each iteration the value of $u_0(\cdot)$ decreases at least by one, after $u_0(\xi)-1$
iterations all points in the resulting set must have $L-$value 1. Another application of the algorithm gives us
(1) and (2) for any choice of couples of $L$. Let $\xi_{u_0(\xi)-1}=\{(a_1,1),(a_2,1),\ldots,(a_r,1)\}$, where
$a_1<a_2<\cdots<a_r$. Suppose that $L$ chooses $(a_1,1)$ and $(a_2,1)$. Then another application of the algorithm
produces
$$\{(a_3-a_1,0),\ldots,(a_r-a_1,0)\}\cup\{(a_3-a_2,0),\ldots,(a_r-a_2,0)\}\cup\{(a_1,1),(a_2,1)\}.$$
Then $\gcd(a_3-a_1,\ldots,a_r-a_1,a_3-a_2,\ldots,a_r-a_2)=1$. Indeed, since $\Z\xi_{u_0(\xi)-1}=\Z^2$
there exist some $\lambda_i\in\Z$ such that $\sum_{i=1}^{r}\lambda_i(a_i,1)=(1,0)$. Consider the linear isomorphism
$T(x,y)=(x-a_1y,y)$. Then $T(\sum_{i=1}^{r}\lambda_i(a_i,1))=T(1,0)=(1,0)$. In particular, $\sum_{i=2}^{r}\lambda_i(a_i-a_1)=1$,
i. e., $\gcd(a_2-a_1,\ldots,a_r-a_1)=1$. This implies the assertion. If $L$ chooses $(a_{r-1},1)$ and $(a_r,1)$, we proceed
similarly.
\end{proof}

For the next lemma, rename $\xi$ as $\xi_0$. Now suppose that after $w<u_0(\xi_0)$ iterations of the algorithm we obtain an
element of $L-$value 0 for the first time, and denote by $\xi=\{(n,0),\gamma_1,\ldots,\gamma_r\}$
the resulting set. Let us suppose that $L$ chooses $\gamma_0=(n,0)$ and $\gamma_1$, so, in particular,
$0=L(\gamma_0)<L(\gamma_1)\leq L(\gamma_j)$, for all $\gamma_j\in\xi$ such that $\det(\gamma_0\mbox{ }\gamma_j)\neq0$.

\begin{lem}\label{l less u_0 iterations}
Let $\xi'=\{\gamma'_1,\ldots,\gamma'_{r'}\}$ be the resulting set after applying the algorithm once again and suppose
that the semigroup $\Z_{\geq0}\xi'$ is not generated by two elements. If $n>0$ then:
\begin{itemize}
\item[(1)]If $L(\gamma_1)=u_1(\xi)$ then $L(\gamma_1)=1$ and $\xi'$ contains $(1,0)$ or at least two elements
of $L-$value 0 and whose first coordinates are relatively prime. In particular, $\xi$ contains an element of
$L-$value 1.
\item[(2)]If $L(\gamma_1)<u_1(\xi)$ then $u_1(\xi')<u_1(\xi)$.
\item[(3)]If the semigroup $\Z_{\geq0}\xi_{u_1(\xi)}$ is not generated by two elements, then $\xi_{u_1(\xi)}$ contains $(1,0)$
or at least two elements of $L-$value 0 whose first coordinates are relatively prime, and an element of $L-$value 1.
\end{itemize}
\end{lem}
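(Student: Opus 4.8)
The plan is to establish (1), (2), and (3) in that order, obtaining (3) from (1) and (2) by iterating the algorithm. The facts I would use throughout are: since $L(x,y)=y$, every element of $L$-value $0$ lies on the line $y=0$; since $L$ chose $\gamma_0=(n,0)$ together with $\gamma_1$, and $\det(\gamma_0\mbox{ }\gamma_k)\neq 0$ exactly when $L(\gamma_k)\neq 0$, the element $\gamma_1$ realizes the least positive $L$-value in $\xi$; and $u_1(\xi)\geq L(\gamma_1)\geq 1$, because the elements of $L$-value $0$ span only $\Z\times\{0\}$. I would also record the one-step bookkeeping: applying the algorithm with the choice $\{\gamma_0,\gamma_1\}$, the set $\xi'$ consists of $(n,0)$ and $\gamma_1$ (kept), the shifts $\gamma_k-(n,0)$ (of $L$-value $L(\gamma_k)$), and the shifts $\gamma_k-\gamma_1$ for $L(\gamma_k)>0$ (of $L$-value $L(\gamma_k)-L(\gamma_1)\geq 0$), the pertinent determinants being nonzero since $n>0$ and $\gamma_1$ is off $y=0$. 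Hence the elements of $L$-value $0$ in $\xi'$ are $(n,0)$, the shifts $(m_k-n,0)$ of the remaining points of $\xi$ on $y=0$, and the points $\gamma_k-\gamma_1$ for the $\gamma_k\neq\gamma_1$ of $L$-value $L(\gamma_1)$; and, since $\gcd(n,m_k-n)=\gcd(n,m_k)$, the gcd of their first coordinates equals the gcd of $n$, of the first coordinates of the remaining points of $\xi$ on $y=0$, and of the first coordinates of those $\gamma_k-\gamma_1$. Finally, since $(n,0)\in\xi'$ has positive first coordinate and $(0,0)\notin\Co(\xi')$, all elements of $L$-value $0$ in $\xi'$ have positive first coordinate, so the hypothesis $n>0$ is preserved under iteration.

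For (1), I would note that $L(\gamma_1)=u_1(\xi)$ is equivalent to saying that the elements of $\xi$ of $L$-value $\leq L(\gamma_1)$ generate $\Z^2$; these are the points on $y=0$ together with the points of $L$-value $v_1:=L(\gamma_1)$, and, subtracting one fixed such point $(a_1,v_1)$ from the rest, their $\Z$-span is $(g\Z\times\{0\})+\Z(a_1,v_1)$ for some $g\geq 1$, each of whose elements has second coordinate divisible by $v_1$. Generating $\Z^2$ thus forces $v_1=1$ and $g=1$. But by the bookkeeping $g$ is exactly the gcd of the first coordinates of the elements of $L$-value $0$ in $\xi'$; being $1$, these are either a single point, necessarily $(1,0)$ by the positivity remark, or at least two points with coprime first coordinates. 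And $\gamma_1\in\xi$ has $L$-value $v_1=1$, which gives the last clause.

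For (2), I would put $v_1:=L(\gamma_1)$, $u:=u_1(\xi)$, $M:=\max(v_1,u-v_1)$, and note $M<u$ since $0<v_1<u$; it then suffices to show that the elements of $\xi'$ of $L$-value $\leq M$ already generate $\Z^2$, for this yields $u_1(\xi')\leq M<u_1(\xi)$. Checking this amounts to writing each $\gamma\in\xi$ with $L(\gamma)\leq u$ as a $\Z$-combination of such elements of $\xi'$: if $L(\gamma)=0$ then $\gamma=(n,0)\in\xi'$ or $\gamma=(\gamma-(n,0))+(n,0)$ with both summands of $L$-value $0$ in $\xi'$; and if $L(\gamma)>0$ then $\gamma=\gamma_1\in\xi'$ or $\gamma=(\gamma-\gamma_1)+\gamma_1$ with $\gamma-\gamma_1\in\xi'$ of $L$-value $L(\gamma)-v_1\leq u-v_1\leq M$ and $\gamma_1\in\xi'$ of $L$-value $v_1\leq M$. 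Since the $\gamma$ with $L(\gamma)\leq u$ generate $\Z^2$, this finishes (2).

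For (3), I would iterate the algorithm from $\xi$ and assume $\Z_{\geq 0}\xi_{u_1(\xi)}$ is not generated by two elements, so that the algorithm runs through step $u_1(\xi)$; at every such step an element of $L$-value $0$ is present and $n$ stays positive, so (1) and (2) apply throughout. Each step of type (2) decreases $u_1$ by at least $1$, whereas $u_1\geq 1$ always; hence not all of the first $u_1(\xi)$ steps are of type (2), and the first step of type (1) occurs at some $k_0\leq u_1(\xi)-1$. By (1), the set one step after it contains $(1,0)$ or two elements of $L$-value $0$ with coprime first coordinates, together with an element of $L$-value $1$. The step I expect to require the most care is the persistence of these two features under the remaining iterations: the bookkeeping keeps the gcd of the first coordinates of the elements of $L$-value $0$ equal to $1$ (so $(1,0)$, or such a coprime pair, survives), and once an element of $L$-value $1$ is present the least positive $L$-value is $1$, so the element playing the role of $\gamma_1$ at each later step has $L$-value $1$ and is kept by the algorithm. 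Hence $\xi_{u_1(\xi)}$ has the stated properties.
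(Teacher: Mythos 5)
Your proof is correct and follows essentially the same route as the paper's: part (1) via the observation that the lattice generated by the elements of $L$-value at most $L(\gamma_1)$ has all second coordinates divisible by $L(\gamma_1)$ (forcing $L(\gamma_1)=1$ and the coprimality of the first coordinates of the $L$-value-$0$ elements of $\xi'$), part (2) via the bound $u_1(\xi')\leq\max\{L(\gamma_1),\,u_1(\xi)-L(\gamma_1)\}<u_1(\xi)$, and part (3) by iterating (2) until case (1) must occur. Your persistence argument in (3) --- that the gcd-one property and the element of $L$-value $1$ survive the remaining iterations up to step $u_1(\xi)$ --- is spelled out more carefully than in the paper, which leaves this point implicit.
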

\begin{proof}
Let $\xi^*=\{\gamma_{j_0},\ldots,\gamma_{j_{s}}\}$ be the elements $\gamma_j\in\xi$ such that
$0\leq L(\gamma_j)\leq u_1(\xi)$. Let $\gamma\in\xi^*$ be such that $L(\gamma)=u_1(\xi)$. Suppose that
$(n,0)=\gamma_{j_0}$, $\gamma_1=\gamma_{j_1}$, and $\gamma=\gamma_{j_{s}}$. By the definition of $u_1(\xi)$,
we have $\Z\xi^*=\Z^2$.
\begin{itemize}
\item[(1)]Suppose that $L(\gamma_1)=u_1(\xi)$. Then $L(\gamma_1)=L(\gamma)$ so
$L(\gamma_{j_k})=0$ or $L(\gamma_{j_k})=L(\gamma_1)$ for all $\gamma_{j_k}\in\xi^*$. Since
$\Z\xi^*=\Z^2$ we have $\gcd(L(\gamma_{j_0}),\ldots,L(\gamma_{j_{s}}))=1$. But then $L(\gamma_1)>0$ implies
$L(\gamma_1)=1$. If, in addition, $n=1$ then we are done.
Suppose $n>1$. Then the cardinality of $\xi^*$ is at least 3. Now proceed as in the previous lemma to find
the elements whose first coordinate are relatively prime.
\item[(2)]Suppose now that $L(\gamma_1)<u_1(\xi)$. Apply the algorithm once to obtain $\xi'$. Consider
the subset $$\xi'^*=\xi'_1\cup\xi'_2\cup\{\gamma_0,\gamma_1\},$$ where $\xi'_1=\{\gamma_i-\gamma_1|i\in\{j_2,\ldots,j_{s}\}\mbox{, }L(\gamma_i)>0\}$
and $\xi'_2=\{\gamma_i-\gamma_0|L(\gamma_i)=0\}$ (see figure \ref{f L<u_2}).
\begin{figure}[ht]
\begin{center}
\includegraphics{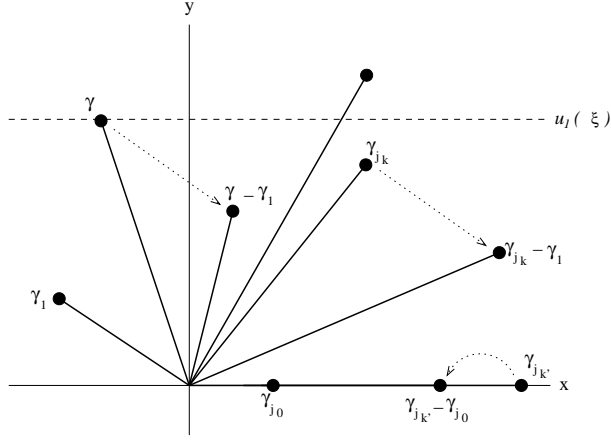}
\caption{$L(\gamma_1)<u_1(\xi)$.\label{f L<u_2}}
\end{center}
\end{figure}
Since $\xi^*\subset\Z\xi'^*$ then $\Z^2=\Z\xi^*\subset\Z\xi'^*$, that is, $\Z^2=\Z\xi'^*$.
Now consider $l=\max\{L(\gamma)-L(\gamma_1),L(\gamma_1)\}$. Since $l\geq L(\gamma_j)$ for all
$\gamma_j\in\xi'^*$ then $u_1(\xi')\leq l$. In addition, $l\leq L(\gamma)=u_1(\xi)$, so that
$$u_1(\xi')\leq u_1(\xi).$$
Suppose that $l=u_1(\xi')$. Then, if $l=L(\gamma)-L(\gamma_1)$ we have $u_1(\xi')=l<L(\gamma)=u_1(\xi)$, since
$L(\gamma_1)>0$. If $l=L(\gamma_1)$ we obtain the same conclusion since, by hypothesis, $L(\gamma_1)<L(\gamma)$.
So, if $l=u_1(\xi')$, for the two possible choices of $l$, we have $u_1(\xi')<u_1(\xi)$. Otherwise $u_1(\xi')<l$
and the conclusion follows once again.
\item[(3)]Since $1\leq u_1(\xi)$, then by $(2)$, after at most $u_1(\xi)-1$ iterations, we will obtain
$u_1(\cdot)=1$. Then by $(1)$ we conclude.
\end{itemize}
\end{proof}

\begin{rem}
The analogous result of the previous lemma for $n<0$ can be reduced to the case $n>0$ by considering the linear isomorphism
$T(x,y)=(-x,y)$, since this isomorphism preserves $L$.
\end{rem}

According to the previous results, after at most $u_0(\xi)$ iterations, the algorithm will produce, first,
an element $(n,0)$, then, some other points of $L-$value 0 such that their first coordinates are relatively prime.
Of all these points, call $(N,0)$ the one with biggest (or smallest if $n<0$) first coordinate. Our next goal will
be to find a bound for $N$.

\begin{lem}\label{l bound for n}
Let $\xi=\{\gamma_1,\ldots,\gamma_r\}\subset\Z^2$ be a set of monomial exponents of some toric surface such
that $L(\gamma_i)\geq0$ for all $i$. Let $v_0(\xi):=\max\{|c_x(\gamma_i)||\gamma_i\in\xi\}$. Let $\xi_w$ be
the resulting set after iterating the algorithm $w$ times. Then
$$v_0(\xi_w)\leq2^w\cdot v_0(\xi).$$
\end{lem}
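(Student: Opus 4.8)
The statement is a growth bound: one application of the algorithm at most doubles the quantity $v_0(\xi) = \max\{|c_x(\gamma_i)| : \gamma_i\in\xi\}$, and the claim then follows by induction on $w$. So the plan is to first settle the case $w=1$, and then iterate.

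For $w=1$: suppose $L$ chooses $\gamma_{i_0}$ and $\gamma_{j_0}$. By step (A3) of the algorithm, the new set $\xi_1$ (up to taking one representative per $L$-value, which only discards points and hence does not increase $v_0$) is contained in $A_{i_0}(\xi)\cup A_{j_0}(\xi)\cup\{\gamma_{i_0},\gamma_{j_0}\}$, whose elements are of the form $\gamma_k-\gamma_{i_0}$, $\gamma_k-\gamma_{j_0}$, $\gamma_{i_0}$, or $\gamma_{j_0}$ with $\gamma_k\in\xi$. For each such element the first coordinate is, respectively, $c_x(\gamma_k)-c_x(\gamma_{i_0})$, $c_x(\gamma_k)-c_x(\gamma_{j_0})$, $c_x(\gamma_{i_0})$, or $c_x(\gamma_{j_0})$, and by the triangle inequality each of these has absolute value at most $|c_x(\gamma_k)|+|c_x(\gamma_{i_0})|\leq 2v_0(\xi)$ (resp.\ $\leq v_0(\xi)\leq 2v_0(\xi)$). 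Hence $v_0(\xi_1)\leq 2v_0(\xi)$.

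For the inductive step, I would apply the $w=1$ bound to $\xi_{w-1}$ in place of $\xi$: since $\xi_w$ is obtained from $\xi_{w-1}$ by one application of the algorithm, $v_0(\xi_w)\leq 2\,v_0(\xi_{w-1})\leq 2\cdot 2^{w-1}v_0(\xi)=2^w v_0(\xi)$, where the second inequality is the induction hypothesis. There is one small point worth being explicit about: one must check that passing to a minimal (or one-representative-per-$L$-value) set of generators between steps, as the algorithm does, cannot increase $v_0$ — but this is immediate since that operation only removes elements. I do not expect any genuine obstacle here; the only thing to be careful about is that the relevant worst case is $\gamma_k - \gamma_{i_0}$ with $c_x(\gamma_k)$ and $c_x(\gamma_{i_0})$ of opposite signs, which is exactly where the factor $2$ (rather than something smaller) is needed and where the bound is essentially sharp.
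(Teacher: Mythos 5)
Your proof is correct and follows essentially the same route as the paper: the paper also proceeds by induction on $w$, with the base case declared clear and the inductive step being exactly your triangle-inequality bound $|c_x(\gamma_k)-c_x(\gamma_{i_0})|\leq 2^k v_0(\xi)+2^k v_0(\xi)$ applied to the elements of $\xi_k$. Your extra remark that discarding generators cannot increase $v_0$ is a harmless refinement the paper leaves implicit.
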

\begin{proof}
We proceed by induction on $w$. For $w=1$ it is clear that $v_0(\xi_1)\leq2\cdot v_0(\xi)$ (see
figure \ref{f 2-pot-w}).
\begin{figure}[ht]
\begin{center}
\includegraphics{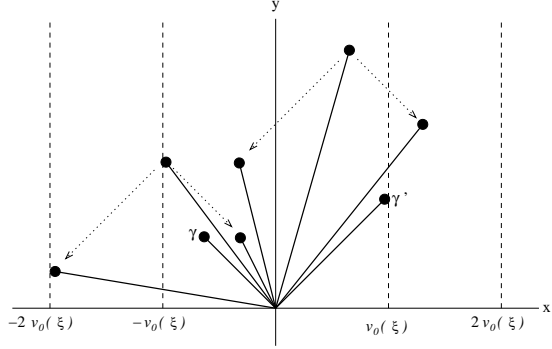}
\caption{$v_0(\xi_w)\leq2^w\cdot v_0(\xi)$.\label{f 2-pot-w}}
\end{center}
\end{figure}
Suppose that $v_0(\xi_k)\leq2^k\cdot v_0(\xi).$ This means that for all $\gamma\in\xi_k$ we have
$-2^k\cdot v_0(\xi)\leq c_x(\gamma)\leq2^k\cdot v_0(\xi)$, and this is true, in particular, for the two elements
chosen by $L$. Therefore, $v_0(\xi_{k+1})\leq2^k\cdot v_0(\xi)+2^k\cdot v_0(\xi)=2^{k+1}\cdot v_0(\xi)$,
which completes the induction.
\end{proof}

\begin{lem}\label{l bound for 1}
Let $\xi=\{(n_1,0),\ldots,(n_s,0)\}\cup\{\gamma_1,\ldots,\gamma_r\}$ be such that $0<L(\gamma_i)$ and $\gcd(n_1,\ldots,n_s)=1$.
Assume that $0<n_1<n_2<\cdots<n_s$. If $n_1=1$, put $v_1(\xi):=1$. If $n_1>1$ let
\begin{align}
v_1(\xi):=\min\{n_i|&\gcd(n_{j_1},\ldots,n_{j_t})=1 \mbox{ where } \{n_{j_1},\ldots,n_{j_t}\}\mbox{ denotes }\notag\\
&\mbox{ the set of all } n_{j_k}\mbox{ such that } n_{j_k}\leq n_i\}\notag
\end{align}
If $\xi'$ denotes the resulting set after applying the algorithm once, then $v_1(\xi')\leq v_1(\xi)-2$. Therefore, if $n_1>1$,
after at most $\lfloor\frac{v_1(\xi)}{2}\rfloor$ iterations we will obtain the element $(1,0)$.
\end{lem}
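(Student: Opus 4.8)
The plan is to show that one application of the algorithm decreases $v_1$ by at least $2$, and then iterate. First I would pin down the couple that $L$ chooses. Since $L(x,y)=y$ and $\xi$ contains the points $(n_i,0)$ of $L$-value $0$, the first element chosen by $L$ has $L$-value $0$, hence is one of the $(n_i,0)$; since two points on the $x$-axis have vanishing determinant, the second chosen element has positive $L$-value. Moreover, exactly as in the proof of Lemma~\ref{l rational chart}, the requirement $(0,0)\notin\Co(\xi')$ forces the first element to be $(n_1,0)$: if $(n_{i_0},0)$ with $i_0\neq1$ were chosen together with some $\gamma_{j_0}$ of positive $L$-value, then $\det((n_1,0)\ \gamma_{j_0})\neq0$ would put $(n_1,0)-(n_{i_0},0)=(n_1-n_{i_0},0)$ in $A_{i_0}(\xi)\subset\xi'$, so $\xi'$ would contain both $(n_{i_0},0)$ and $(n_1-n_{i_0},0)$, one with positive and one with negative first coordinate, placing the origin in $\Co(\xi')$.

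Next I would read off the $L$-value-$0$ part of $\xi'$. With $\gamma_{i_0}=(n_1,0)$ and $\gamma_{j_0}$ of positive $L$-value we have $\det((n_m,0)\ \gamma_{j_0})\neq0$ for every $m$, so $A_{i_0}(\xi)$ contains $(n_m-n_1,0)$ for all $m\geq2$; together with $(n_1,0)=\gamma_{i_0}$ itself this shows that the set of first coordinates of the $L$-value-$0$ points of $\xi'$ contains $\Delta:=\{n_1\}\cup\{n_m-n_1\mid 2\leq m\leq p\}$, where $v_1(\xi)=n_p$ (passing to a minimal generating set only discards multiples of smaller elements, which changes neither the relevant gcd's nor the maximum). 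All elements of $\Delta$ are positive, and $\gcd(\Delta)=\gcd(n_1,n_2,\ldots,n_p)=1$ by definition of $v_1$. Writing $M:=\max(\Delta)=\max(n_1,\ n_p-n_1)$, the set of $L$-value-$0$ first coordinates of $\xi'$ that are $\leq M$ contains $\Delta$, hence has gcd $1$, so $v_1(\xi')\leq M$.

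It then remains the elementary bound $M\leq n_p-2=v_1(\xi)-2$, together with one degenerate case. Here $n_1>1$ (otherwise $v_1(\xi)=1$), so $n_1\geq2$. If $n_p\geq 2n_1$ then $M=n_p-n_1\leq n_p-2$. If $n_p<2n_1$ then $M=n_1$, and since $n_1<n_p$ either $n_1\leq n_p-2$, or $n_1=n_p-1$; in the latter case $n_1<n_2<\cdots<n_p=n_1+1$ forces $p=2$ and $n_2-n_1=1\in\Delta$, whence $1$ is an $L$-value-$0$ first coordinate of $\xi'$ and $v_1(\xi')=1\leq n_1-1=v_1(\xi)-2$. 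In all cases $v_1(\xi')\leq v_1(\xi)-2$.

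Finally I would check that the hypotheses persist and then iterate. The gcd of the $L$-value-$0$ first coordinates of $\xi'$ still equals $1$ (it divides $\gcd(\Delta)=1$), and $\xi'$ still contains points of positive $L$-value, for otherwise $\Z\xi'\subseteq\Z\times\{0\}\neq\Z^2$; the normalization isomorphisms $T(x,y)=(x-\lambda y,y)$ used along the way fix the $x$-axis pointwise, so they do not affect $v_1$. Hence, as long as $\Z_{\geq0}\xi_k$ is not generated by two elements, $v_1$ drops by at least $2$ at each step, and since $v_1(\xi_k)=1$ is equivalent to $(1,0)\in\xi_k$, after at most $\lfloor v_1(\xi)/2\rfloor$ iterations the point $(1,0)$ appears. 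I expect the first paragraph to be the main obstacle — identifying exactly which couple $L$ chooses and the precise shape of $\xi'$ — since the rest is bookkeeping with gcd's; the only other delicate point is the degenerate case $n_p=n_1+1$ in the third paragraph, where the naive estimate $\max(\Delta)\leq v_1(\xi)-2$ fails and must be treated separately.
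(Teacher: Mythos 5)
Your proof is correct and follows essentially the same route as the paper's: after one application of the algorithm the set of $L$-value-$0$ first coordinates contains $\{n_1\}\cup\{n_m-n_1\mid 2\leq m\leq p\}$ with gcd $1$, so $v_1(\xi')\leq\max(n_1,\,n_p-n_1)$, and the same case analysis (including the degenerate case $n_p=n_1+1$) gives the drop by $2$. The only difference is that you explicitly justify why $L$ must choose $(n_1,0)$ together with a point of positive $L$-value, a step the paper leaves implicit.
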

\begin{proof}
Since we are looking for the element $(1,0)$, we assume that $n_1>1$. Suppose that $n_{i_0}=v_1(\xi)$ where $2\leq i_0\leq s$.
After applying once the algorithm we obtain, in particular, the subset $\{(n_1,0),(n_2-n_1,0),\ldots,(n_{i_0}-n_1,0)\}\subset\xi'.$
Call $N=\max\{n_1,n_{i_0}-n_1\}$. Since $\gcd(n_1,n_2-n_1,\ldots,n_{i_0}-n_1)=1$ we have $v_1(\xi')\leq N$. If $N=n_{i_0}-n_1$
then, since $n_1\geq2$ we have $v_1(\xi')\leq n_{i_0}-n_1\leq v_1(\xi)-2$. Suppose now that $N=n_1$. If $n_{i_0}=n_1+1$ then
$n_{i_0}-n_1=1$ and $v(\xi')=1$ and we are done. Otherwise $n_{i_0}>n_1+1$ which implies $v_1(\xi')\leq n_1\leq n_{i_0}-2$.
This proves the lemma.
\end{proof}

\begin{lem}\label{l bound to finish}
Let $\xi=\{(1,0),(0,1),\gamma_1,\ldots,\gamma_r\}$. Then after at most $u_0(\xi)$ iterations,
the algorithm stops.
\end{lem}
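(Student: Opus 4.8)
The plan is to reduce to the situation already treated in Proposition~\ref{p first case} and then to read off the bound from a refined count in its proof. First I would replace $\xi$ by its unique minimal set of monomial exponents $\xi_{\min}\subset\Z_{\ge0}\xi$: this changes neither the toric surface nor, therefore, the Nash modification algorithm or its output, and it can only make $u_0$ smaller, so $u_0(\xi_{\min})\le u_0(\xi)$. Since $(1,0)\in\xi$ and $(0,0)\notin\Co(\xi)$, every element of $\xi$ of $L$-value $0$ lies on the positive $x$-axis, so $(1,0)$ is the only element of $\xi_{\min}$ of $L$-value $0$; moreover $(0,1)\in\Z_{\ge0}\xi$, so the element of $\xi_{\min}$ of $L$-value $1$ with smallest first coordinate has non-positive first coordinate. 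Using that $(1,0)$ is available one checks that for each $L$-value occurring in $\xi_{\min}$ there is exactly one element with that value, that it is the one with smallest first coordinate, and (by minimality) that these first coordinates strictly decrease as the $L$-value grows. Applying the $L$-preserving isomorphism $T(x,y)=(x-\lambda y,y)$ that sends the $L$-value-$1$ element to $(0,1)$ brings $\xi_{\min}$ to the form $\{(1,0),(0,1),(-a_1,b_1),\dots,(-a_s,b_s)\}$ with $1\le a_1<\dots<a_s$ and $1<b_1<\dots<b_s$, which is precisely the hypothesis of Proposition~\ref{p first case}; for such a set $u_0$ equals $b_s$ (or $1$ if $s=0$).

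Next I would rerun the induction of Proposition~\ref{p first case}, now keeping track of the number of iterations. If $s=0$ the semigroup is already two-generated, so $0\le u_0(\xi)$ iterations suffice. If $s=1$ the lemma preceding Proposition~\ref{p first case} gives termination after exactly $b_1-1$ iterations. For $s\ge2$, the proof of Proposition~\ref{p first case} shows that $L$ keeps choosing $(1,0)$ and $(0,1)$ for $b_1-1$ consecutive steps — the $L$-values of the remaining points decreasing by $1$ at each step — after which the semigroup is generated by a set of the same shape with $s-1$ nontrivial generators, of $L$-values $b_2-(b_1-1)<\dots<b_s-(b_1-1)$. Hence the cost of this stage is $b_1-1$, that of the next stage is $\bigl(b_2-(b_1-1)\bigr)-1=b_2-b_1$, and so on; the stage costs $b_1-1,\ b_2-b_1,\ b_3-b_2,\dots,b_s-b_{s-1}$ telescope to $b_s-1$. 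So the algorithm applied to the reduced set stops after at most $b_s-1=u_0(\xi_{\min})-1\le u_0(\xi)$ iterations, and since the reduction was effected by isomorphisms of toric surfaces preserving $L$ — so the sequence of charts selected by $L$ is unchanged — the same bound holds for $\xi$ itself.

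The delicate part is the reduction step, not the count: one must check carefully that passing to $\xi_{\min}$ and applying $T$ produces exactly a set of the form of Proposition~\ref{p first case} with $u_0$ not increased, and that this operation genuinely commutes with the algorithm, i.e.\ leaves unchanged the number of Nash modifications needed to stop. Granting that, the count is the routine telescoping indicated above, the only point needing verification there being that throughout each block of iterations $L$ keeps selecting $(1,0)$ together with the (then unique) point of $L$-value $1$, which holds because $(0,1)$ is always available as a chosen element and no descendant of the $(-a_j,b_j)$ has $L$-value as low as $1$ at any intermediate state on which a choice is based.
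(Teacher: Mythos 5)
Your proposal is correct and follows the paper's own route: the paper's proof of this lemma is literally ``a direct application of the proof of Proposition~\ref{p first case}'', and your reduction to the minimal set of the form $\{(1,0),(0,1),(-a_1,b_1),\dots,(-a_s,b_s)\}$ followed by the telescoping count $(b_1-1)+(b_2-b_1)+\dots+(b_s-b_{s-1})=b_s-1\le u_0(\xi)$ is exactly the step-count that proof yields. You have merely made explicit the bookkeeping (minimality, uniqueness of the $L$-value-$1$ element at each stage, and the fact that the $L$-preserving isomorphisms do not affect the iteration count) that the paper leaves implicit.
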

\begin{proof}
This is a direct application of the proof of proposition \ref{p first case}.
\end{proof}

\begin{rem}
Analogous results for the two previous lemmas for the cases $n_s<n_{s-1}<\cdots<n_1<0$, or $(-1,0)$ instead of $(1,0)$, can be reduced
to the previous cases by considering the linear isomorphism $T(x,y)=(-x,y)$, since this isomorphism preserves $L$.
\end{rem}

Now we are ready to give an estimate of how many iterations are needed for the algorithm to stop.
Let $\xi=\{\gamma_1,\ldots,\gamma_r\}\subset\Z^2$ be a set of monomial exponents
of some toric surface. Consider $L(x,y)=ax+by$ with $a$, $b\in\Z$ relatively prime, and such that $L(\xi)\geq0$.
Under these conditions, we can suppose, up to linear isomorphism of determinant 1, that
$\xi\subset\Z\times\Z_{\geq0}$ and $L(x,y)=y$.

\begin{teo}
Let $\xi=\{\gamma_1,\ldots,\gamma_r\}\subset\Z\times\Z_{\geq0}$ be a set of monomial exponents of some
toric surface. Consider $L(x,y)=y$. Then after at most
$$2\cdot u_0(\xi)+2^{u_0(\xi)-1}\cdot v_0(\xi)$$
iterations following $L$, the algorithm stops.
\end{teo}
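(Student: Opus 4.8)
The plan is to split the execution of the algorithm into three consecutive phases and then add up the bounds supplied by the lemmas of this section, using crucially that $u_0(\cdot)$ is non-increasing: a new $L$-value produced by one step of the algorithm is either the $L$-value of one of the two chosen points or a difference $L(\gamma)-L(\gamma')$ with $L(\gamma')$ the minimal $L$-value, hence at most $u_0$ of the previous set, and the $L$-preserving isomorphisms we use do not change $u_0$.

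First I would run the algorithm until a point of $L$-value $0$ appears for the first time, and then a little longer until the first coordinates of the points of $L$-value $0$ become coprime. If $\xi$ has no point of $L$-value $0$, Lemma \ref{l cases i ii} shows this eventually happens, and the number $w$ of iterations needed to produce the first $L$-value-$0$ point is at most $u_0(\xi)$, since $u_0(\cdot)$ drops by at least one in every iteration whose minimal $L$-value is positive. If $w=u_0(\xi)$, Lemma \ref{l u_0 iterations} already gives a set containing an element of $L$-value $1$ together with at least two elements of $L$-value $0$ with coprime first coordinates. If $w<u_0(\xi)$ (this includes $w=0$, i.e.\ when $\xi$ itself has a point of $L$-value $0$; after possibly applying $T(x,y)=(-x,y)$ we may assume $n>0$), Lemma \ref{l less u_0 iterations}(3) yields such a set after at most $u_1(\xi_w)$ further iterations, and since $u_1\le u_0$ and $u_0(\xi_w)\le u_0(\xi)-w$, the total $w+u_1(\xi_w)$ is still at most $u_0(\xi)$. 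So after at most $u_0(\xi)$ iterations I reach a set $\xi'$ of the form $\{(n_1,0),\dots,(n_s,0)\}\cup\{\gamma'_1,\dots,\gamma'_t\}$ with $0<L(\gamma'_i)$, $\gcd(n_1,\dots,n_s)=1$, and some $\gamma'_i$ of $L$-value $1$ (the case $(1,0)\in\xi'$, i.e.\ $n_1=1$, being included).

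Next I would apply Lemma \ref{l bound for 1} to $\xi'$: after at most $\lfloor v_1(\xi')/2\rfloor$ iterations the point $(1,0)$ appears, and an element of $L$-value $1$ is still present, because the second coordinates of all points of positive $L$-value stay unchanged as long as $L$ keeps choosing the $L$-value-$0$ point with smallest first coordinate together with an $L$-value-$1$ point. Since $v_1(\xi')\le v_0(\xi')$ and $\xi'$ was obtained from $\xi$ in $w\le u_0(\xi)$ iterations, Lemma \ref{l bound for n} gives $v_0(\xi')\le 2^{w}v_0(\xi)\le 2^{u_0(\xi)}v_0(\xi)$, so this phase costs at most $2^{u_0(\xi)-1}v_0(\xi)$ iterations. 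Finally, the current set contains $(1,0)$ and a point $(\lambda,1)$; applying $T(x,y)=(x-\lambda y,y)$, which fixes $(1,0)$ and preserves $L$, puts it in the form $\{(1,0),(0,1),\gamma_1,\dots,\gamma_r\}$, and Lemma \ref{l bound to finish} makes the algorithm stop after at most $u_0$ of this last set, hence at most $u_0(\xi)$, further iterations. Adding the three bounds gives $u_0(\xi)+2^{u_0(\xi)-1}v_0(\xi)+u_0(\xi)$, as wanted. (If at any moment the semigroup becomes generated by two elements the algorithm stops even sooner, so these are upper bounds.)

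The main obstacle will be Phase 1: one has to be sure that reaching the first $L$-value-$0$ point and then creating coprime first coordinates together cost no more than $u_0(\xi)$ iterations. This is exactly what Lemmas \ref{l u_0 iterations} and \ref{l less u_0 iterations} are designed for, together with the bookkeeping inequality $w+u_1(\xi_w)\le u_0(\xi)$; the remaining ingredients are routine — the sign of the first coordinate is absorbed by $T(x,y)=(-x,y)$, and the monotonicity of $u_0(\cdot)$ prevents the last phase from reintroducing large $L$-values.
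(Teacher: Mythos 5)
Your proposal is correct and follows essentially the same route as the paper's proof: the same three-phase decomposition (reach $L$-value $0$ with coprime first coordinates via Lemmas \ref{l u_0 iterations} and \ref{l less u_0 iterations}, produce $(1,0)$ via Lemmas \ref{l bound for n} and \ref{l bound for 1}, then finish via Lemma \ref{l bound to finish}), with the same bookkeeping inequality $w+u_1(\xi_w)\leq u_0(\xi)$. The only cosmetic difference is that in the case where the first $L$-value-$0$ element appears after exactly $u_0(\xi)$ iterations the paper notes the final phase is unnecessary (all $L$-values are already in $\{0,1\}$), whereas you invoke the last lemma uniformly; both fit within the stated bound.
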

\begin{proof}
Suppose first that $L(\gamma_i)>0$ for all $i=1,\ldots,r$. If after exactly $u_0(\xi)$ iterations we obtain for
the first time an element of $L-$value 0, say $(n,0)$, then according to lemma \ref{l u_0 iterations},
$\xi_{u_0(\xi)}$ satisfies $0\leq L(\xi_{u_0(\xi)})\leq1$, contains at least two elements of $L-$value 0 such that
their first coordinates are relatively prime, and at least one element of $L-$value 1. In addition,
$v_0(\xi_{u_0(\xi)})\leq2^{u_0(\xi)}\cdot v_0(\xi)$ according to lemma \ref{l bound for n}.
Therefore, if we do not have it already, by lemma \ref{l bound for 1}, after at most $2^{u_0(\xi)-1}\cdot v_0(\xi)$
iterations we will obtain a set $\xi'$ that contains $(1,0)$ (or $(-1,0)$). Since $0\leq L(\xi_{u_0(\xi)})\leq1$,
the set $\xi'$ also satisfies these inequalities. But now having $(1,0)$ (or $(-1,0)$) implies that
the algorithm stops. Summarizing, we needed, at most, $u_0(\xi)+2^{u_0(\xi)-1}\cdot v_0(\xi)$ iterations for 
the algorithm to stop. Since 
$$u_0(\xi)+2^{u_0(\xi)-1}\cdot v_0(\xi)<2\cdot u_0(\xi)+2^{u_0(\xi)-1}\cdot v_0(\xi),$$
the theorem is true in this case.

Suppose now that after $w$ iterations, where $w<u_0(\xi)$, the set $\xi_w$ contains an element $(n,0)$.
Rename $\xi$ as $\xi_0$ and $\xi_w$ as $\xi$. By lemma \ref{l less u_0 iterations}, after $u_1(\xi)$ iterations,
the set $\xi_{u_1(\xi)}$ contains $(1,0)$ (or $(-1,0)$ if $n<0$) or at least two elements of $L-$value 0
such that their first coordinates are relatively prime, and at least one element of $L-$value 1. In addition,
$v_0(\xi_{u_1(\xi)})\leq 2^{u_1(\xi)}\cdot v_0(\xi)\leq2^{u_1(\xi)}\cdot 2^w\cdot v_0(\xi_0),$
according to lemma \ref{l bound for n}. Therefore, after at most $2^{u_1(\xi)+w-1}\cdot v_0(\xi_0)$
iterations we will obtain an element $(1,0)$ (or $(-1,0)$), by lemma \ref{l bound for 1}. Now we are in the situation
of lemma \ref{l bound to finish}. Since $u_0(\xi_k)\leq u_0(\xi_0)$ for any $k\in\N$, then after at most $u_0(\xi_0)$
new iterations the algorithm stops. Summarizing, we needed, at most, 
$$w+u_1(\xi)+2^{u_1(\xi)+w-1}\cdot v_0(\xi_0)+u_0(\xi)$$
iterations for the algorithm to stop. Since $u_1(\xi)\leq u_0(\xi)\leq u_0(\xi_0)-w$, we obtain
$$w+u_1(\xi)+2^{u_1(\xi)+w-1}\cdot v_0(\xi_0)+u_0(\xi)\leq 2\cdot u_0(\xi_0)+2^{u_0(\xi_0)-1}\cdot v_0(\xi_0),$$
and therefore the theorem is also true in this case.

Finally, if $\xi$ already contains some element of $L-$value 0 then we are in the same si\-tuation as in the previous
paragraph without doing the first $w$ iterations. Therefore the result follows similarly. This proves the theorem.
\end{proof}

What about the case where $L(x,y)=ax+by$ with $a$ or $b$ irrational? In all the examples we have computed following such an $L$, 
the algorithm also stops (cf. example \ref{e. ejemplos}, (ii)). However we do not have a proof that this is always the case nor 
we know an example in which the iteration of the algorithm following a linear map $L$ of irrational slope never stops.




\section{\large Local uniformization}

In this section we show that theorem \ref{t. main theorem} implies local uniformization of a toric surface 
for some valuations.
\\
\\
Let $\Gamma$ be an additive abelian totally ordered group. Add to $\Gamma$ an element $+\infty$ such that
$\alpha<+\infty$ for every $\alpha\in\Gamma$ and extend the law on $\Gamma_{\infty}=\Gamma\cup\{+\infty\}$
by $(+\infty)+\alpha=(+\infty)+(+\infty)=+\infty$.

\begin{defi}
Let R be a ring. A valuation of R with values in $\Gamma$ is a mapping $\nu:R\rightarrow\Gamma_{\infty}$ such that:
\begin{itemize}
\item[(i)]$\nu(x\cdot y)=\nu(x)+\nu(y)$ for every $x$, $y\in R$,
\item[(ii)]$\nu(x+y)\geq\min(\nu(x),\nu(y))$ for every $x$, $y\in R$,
\item[(iii)]$\nu(x)=+\infty\Leftrightarrow x=0$.
\end{itemize}
The ring $V=\{x\in R|\nu(x)\geq0\}$ is called the valuation ring associated to $\nu$.
\end{defi}

We will be interested in valuations of the field of rational functions of a toric surface which are trivial
over $\C$. These valuations are classified as follows.

\begin{pro}\label{p. classify groups}
Up to isomorphism, the groups of values $\Gamma$ for valuations of the field of fractions of an algebraic surface over $\C$ are:
\begin{itemize}
\item[(1)]Any subgroup of $\Q$,
\item[(2)]$\Z^2_{lex}$,
\item[(3)]$\Z+\beta\Z$, with $\beta\in\R\setminus\Q$ and $\beta\geq0$.
\end{itemize}
\end{pro}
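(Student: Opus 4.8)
The plan is to recover $\Gamma$ from its two classical invariants --- the rational rank $\operatorname{rat.rk}\Gamma=\dim_{\Q}(\Gamma\otimes_{\Z}\Q)$ and the real rank, i.e. the number of proper convex subgroups of $\Gamma$ --- and then, for each resulting possibility, to exhibit a concrete valuation of $\C(x_1,x_2)$ realizing it. The only external input I would use is Abhyankar's inequality: for a valuation $\nu$ of the fraction field $K$ of a surface over $\C$, trivial on $\C$, one has $\operatorname{rat.rk}\Gamma+\operatorname{tr.deg}_{\C}k_{\nu}\le\operatorname{tr.deg}_{\C}K=2$, where $k_{\nu}$ denotes the residue field. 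Since $k_{\nu}\supseteq\C$ this forces $\operatorname{rat.rk}\Gamma\le 2$, and together with the elementary fact that the real rank never exceeds the rational rank it forces the real rank to be $0$, $1$ or $2$.

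Next I would split into cases according to $\operatorname{rat.rk}\Gamma$. If $\operatorname{rat.rk}\Gamma\le 1$, then $\Gamma$ is archimedean, hence (H\"older) order-isomorphic to a subgroup of $(\R,+)$; choosing any $\gamma_0\neq 0$ in $\Gamma$ and sending $\gamma\mapsto q$ whenever $\gamma=q\gamma_0$ identifies $\Gamma$ with a subgroup of $\Q$, since every element of $\Gamma$ is a rational multiple of $\gamma_0$ --- this is case (1), the trivial group $\Gamma=0$ included. If $\operatorname{rat.rk}\Gamma=2$, then equality holds in Abhyankar's inequality, so $\operatorname{tr.deg}_{\C}k_{\nu}=0$ and, $\C$ being algebraically closed, $k_{\nu}=\C$; by Abhyankar's theorem on valuations attaining equality, $\Gamma$ is then a finitely generated torsion-free abelian group of rank $2$, i.e. free of rank $2$. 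If its real rank is $1$, embedding $\Gamma$ in $\R$ and rescaling one free generator to equal $1$ writes $\Gamma=\Z+\beta\Z$ with $\beta\in\R\setminus\Q$ (rationality of $\beta$ would drop the rank), and since $\Z+\beta\Z=\Z+|\beta|\Z$ we may take $\beta\ge 0$ --- case (3). If its real rank is $2$, let $\Gamma_1$ be the unique nonzero proper convex subgroup; both $\Gamma_1$ and $\Gamma/\Gamma_1$ have rational rank $1$ and, being respectively a subgroup and a quotient of the finitely generated free group $\Gamma$, each is isomorphic to $\Z$; the extension $0\to\Gamma_1\to\Gamma\to\Gamma/\Gamma_1\to 0$ splits because $\Gamma/\Gamma_1$ is free, and the order --- in which $\Gamma_1$ is convex and $\Gamma/\Gamma_1$ carries its standard order --- is then exactly the lexicographic one, so $\Gamma\cong\Z^2_{lex}$ --- case (2).

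It remains to check that all three types actually occur over $\C(x_1,x_2)$. For (3) take the monomial valuation with $\nu(x_1)=1$, $\nu(x_2)=\beta$; for (2) take the monomial valuation into $\Z^2_{lex}$ with $\nu(x_1)=(1,0)$, $\nu(x_2)=(0,1)$ (a composite of two divisorial valuations). For an arbitrary prescribed subgroup $G\subseteq\Q$ one realizes $G$ as the value group of a rank-$1$ valuation of $\C(x_1,x_2)$ by a standard construction --- for instance via a sequence of key polynomials whose successive values generate $G$, or equivalently a sequence of infinitely near points with a prescribed multiplicity sequence; concretely one may embed $x_1\mapsto t$, $x_2\mapsto\sum_m t^{a_m}$ into a field of generalized power series for a suitably chosen increasing sequence $a_m\in G_{>0}$. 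I expect this realization for non-finitely-generated subgroups of $\Q$ to be the main obstacle: one must choose the defining data so that the value group of the restriction to $\C(x_1,x_2)$ is exactly $G$, with no extra denominators introduced; the remaining verifications (Abhyankar's inequality in this setting, the convexity bookkeeping, and the monomial-valuation computations) are routine and I would cite the literature for them.
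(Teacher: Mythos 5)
The paper does not prove this proposition itself; it simply cites Vaqui\'{e}, Section 3.2, and your argument is precisely the standard classification given there: Abhyankar's inequality to bound the rational rank by $2$, H\"older's theorem in the archimedean (rational rank $\le 1$) case, finite generation of $\Gamma$ in the Abhyankar-equality case followed by the real-rank dichotomy, and explicit realizations of each type. Your sketch is correct, and you rightly isolate the only genuinely delicate point --- realizing an arbitrary, possibly non-finitely-generated, subgroup of $\Q$ as a value group of $\C(x_1,x_2)$ --- which is exactly what the cited reference handles via the generalized power series / key polynomial constructions you describe.
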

\begin{proof}
See \cite{Va}, Section 3.2.
\end{proof}

Let $K$ be a field, $\nu$ a valuation of $K$, and $V$ the valuation ring associated to $\nu$.
\begin{defi}
Let $R$ be a subring of $K$. We say that $\nu$ is centered on $R$, or has a center on $R$, if $R\subset V$.
If $X=\mbox{Spec }R$, then we say that $\nu$ is centered on $X$, or has a center on $X$, if has a center on $R$. 
In this case, the center of $\nu$ is the prime ideal of $R$ defined by $R\cap\mathfrak{m}$, where $\mathfrak{m}$ 
is the maximal ideal of $V$.
\end{defi}

\begin{pro}\label{p. center birational}
Let $X$ and $X'$ be two algebraic varieties over $\C$ with the same field of rational functions and let 
$h:X'\rightarrow X$ be a birational and proper morphism. Then any valuation having a center on $X$ has also
a center on $X'$.
\end{pro}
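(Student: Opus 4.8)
The plan is to deduce this from the valuative criterion of properness. Write $V$ for the valuation ring of $\nu$ and $\mathfrak{m}$ for its maximal ideal; since $X$ and $X'$ have the same function field $K:=\C(X)=\C(X')$ and $\nu$ is trivial on $\C$, one has $\operatorname{Frac}(V)=K$. The first step is to recast the notion of center in scheme-theoretic terms: $\nu$ has a center on $X$ if and only if the canonical morphism $\eta_X\colon\operatorname{Spec}K\to X$ onto the generic point of $X$ extends to a morphism $u\colon\operatorname{Spec}V\to X$. Indeed, the image $x$ of the closed point of $\operatorname{Spec}V$ under such a $u$ is then the center, and the conditions $\mathcal{O}_{X,x}\subseteq V$ and $\mathfrak{m}\cap\mathcal{O}_{X,x}=\mathfrak{m}_x$ are precisely the statement that $u$ is a morphism of schemes whose stalk map at the closed point is a local homomorphism. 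Likewise $\nu$ has a center on $X'$ if and only if $\eta_{X'}\colon\operatorname{Spec}K\to X'$ extends to $\operatorname{Spec}V\to X'$; since $X'$ is covered by finitely many affine opens, any such morphism factors through one of them, which is the center on an affine chart in the sense of the definition above.

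Second, I would assemble the commutative square. Because $h$ is birational, it carries the generic point of $X'$ to the generic point of $X$ and induces the identity on $K$, so $h\circ\eta_{X'}=\eta_X$. Combined with $u|_{\operatorname{Spec}K}=\eta_X$, this makes the square with corners $\operatorname{Spec}K$, $\operatorname{Spec}V$, $X'$, $X$ commute, the four maps being $\eta_{X'}$, $u$, the inclusion $\operatorname{Spec}K\hookrightarrow\operatorname{Spec}V$ of the generic point, and $h$. The one thing to check is that $u$ sends the generic point of $\operatorname{Spec}V$ to the generic point of $X$, and this is automatic: the stalk map $\mathcal{O}_{X,x}\to V$ is the inclusion of a local domain into a subring of its own fraction field $K$, hence factors the canonical inclusion $\mathcal{O}_{X,x}\hookrightarrow K$.

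Third, apply the valuative criterion of properness. The morphism $h\colon X'\to X$ is of finite type between noetherian schemes and separated (these being automatic for varieties over $\C$) and is proper by hypothesis, so there is a unique morphism $\tilde u\colon\operatorname{Spec}V\to X'$ with $h\circ\tilde u=u$ and $\tilde u|_{\operatorname{Spec}K}=\eta_{X'}$. Letting $x'$ be the image of the closed point of $\operatorname{Spec}V$ under $\tilde u$, the stalk map $\mathcal{O}_{X',x'}\to V$ is a local homomorphism of subrings of $K$, so $\mathcal{O}_{X',x'}\subseteq V$ and $\mathfrak{m}\cap\mathcal{O}_{X',x'}=\mathfrak{m}_{x'}$; passing to any affine neighbourhood of $x'$ exhibits a center of $\nu$ on $X'$ in the sense of the definition. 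The only genuine difficulty is the reformulation in the first two paragraphs — identifying ``having a center on $X$'' with the input datum of the valuative criterion and checking the square commutes — since all the hypotheses of the criterion hold automatically here. One could equally avoid schemes by choosing an affine neighbourhood $\operatorname{Spec}A$ of the center of $\nu$ on $X$, restricting $h$ to the proper morphism $h^{-1}(\operatorname{Spec}A)\to\operatorname{Spec}A$, and running the same argument affine-locally; the scheme-theoretic formulation is the cleanest.
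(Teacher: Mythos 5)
Your proof is correct. Note that the paper does not actually prove this proposition: it simply cites Vaqui\'{e} (\emph{Valuations and local uniformization}, Proposition 2.10). The argument you give --- reformulating ``$\nu$ has a center on $X$'' as the existence of an extension of $\operatorname{Spec}K\to X$ to $\operatorname{Spec}V\to X$, and then lifting along $h$ by the valuative criterion of properness --- is the standard proof of that cited result, and all the steps you flag as needing care (the scheme-theoretic reformulation of the center, the commutativity of the square, the applicability of the criterion for non-discrete valuation rings) are handled correctly.
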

\begin{proof}
See \cite{Va}, Proposition 2.10.
\end{proof}

In the language of schemes, toric surfaces can be characterized as follows:

\begin{lem}
Let $\xi=\{\gamma_1,\ldots,\gamma_r\}\subset\Z^2$ be a set of monomial exponents of some toric surface $X\subset\C^r$.
Consider the morphism of $\C-$algebras,
\begin{align}
\phi:\C[z_1,\ldots,&z_r]\rightarrow\C[x_1,x_2,x_1^{-1},x_2^{-1}]\notag\\
&z_i\mapsto x^{\gamma_i}\notag
\end{align}
Denote by $\C[x^{\xi}]$ the image of $\phi$. Then $X$ is homeomorphic to the set of closed points of $\mbox{Spec}(\C[x^{\xi}])$.
\end{lem}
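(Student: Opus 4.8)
The statement to prove is the characterization lemma: $X$ is homeomorphic to the set of closed points of $\mathrm{Spec}(\mathbb{C}[x^\xi])$.

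The plan is to show that the toric surface $X$, defined as the Zariski closure of the image of the monomial map $\Phi:(\mathbb{C}^*)^2\to\mathbb{C}^r$, coincides (as a topological space on closed points) with the affine variety defined by the kernel of $\phi$. First I would observe that the map $\phi:\mathbb{C}[z_1,\ldots,z_r]\to\mathbb{C}[x_1,x_2,x_1^{-1},x_2^{-1}]$ has image $\mathbb{C}[x^\xi]$, so by the first isomorphism theorem $\mathbb{C}[z_1,\ldots,z_r]/\ker\phi\cong\mathbb{C}[x^\xi]$. Hence $\mathrm{Spec}(\mathbb{C}[x^\xi])$ is the closed subscheme of $\mathbb{A}^r_{\mathbb{C}}$ cut out by the ideal $I:=\ker\phi$; its closed points form the zero set $V(I)\subset\mathbb{C}^r$ with the Zariski topology.

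Next I would identify $V(I)$ with $X$. Since $\mathbb{C}$ is algebraically closed, the closed points of $\mathrm{Spec}(\mathbb{C}[z_1,\ldots,z_r]/I)$ are exactly the maximal ideals containing $I$, which correspond bijectively to points of $V(I)$. On the other hand, by definition $X=\overline{\mathrm{im}\,\Phi}$ is the smallest Zariski-closed subset of $\mathbb{C}^r$ containing $\mathrm{im}\,\Phi$; equivalently, $X=V(J)$ where $J=\{f\in\mathbb{C}[z_1,\ldots,z_r]\mid f\circ\Phi\equiv 0\}$ is the ideal of polynomials vanishing on the image. But $f\circ\Phi$ is precisely $\phi(f)$ evaluated on $(\mathbb{C}^*)^2$, and a Laurent polynomial $\phi(f)\in\mathbb{C}[x_1^{\pm},x_2^{\pm}]$ vanishes identically on $(\mathbb{C}^*)^2$ if and only if it is the zero element of the ring — this uses that $(\mathbb{C}^*)^2$ is Zariski dense in $\mathbb{C}^2$ and that a nonzero Laurent polynomial does not vanish on $(\mathbb{C}^*)^2$. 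Therefore $J=\ker\phi=I$, and so $X=V(I)$ as sets with the induced Zariski topology.

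The homeomorphism claim then follows: the set of closed points of $\mathrm{Spec}(\mathbb{C}[x^\xi])$, equipped with its Zariski topology, is by the above in natural bijection with $V(I)=X$, and the topology matches since both are the subspace topology from $\mathbb{C}^r$ (the closed sets on the scheme's closed points are cut out by elements of $\mathbb{C}[x^\xi]$, i.e.\ by the images $\phi(f)$, which restrict to the defining equations of $X$). The main point to be careful about is the equality $J=\ker\phi$, i.e.\ that no nonzero Laurent polynomial vanishes on all of $(\mathbb{C}^*)^2$; this is the only place requiring an argument beyond formal manipulation, and it reduces to the standard fact that $(\mathbb{C}^*)^2$ is an integral domain's spectrum dense in $\mathbb{A}^2$, equivalently that $\mathbb{C}[x_1^{\pm},x_2^{\pm}]$ is a domain with no zero divisors. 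With that in hand, the rest is bookkeeping, and I would simply cite \cite{St}, Chapter 13, for the standard toric-variety facts if a fully self-contained treatment is not desired.
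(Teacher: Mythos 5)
Your proof is correct: you identify $\mathrm{Spec}(\C[x^{\xi}])$ with the closed subscheme of $\A^r$ cut out by $\ker\phi$, show the vanishing ideal of $\mathrm{im}\,\Phi$ equals $\ker\phi$ (using that a nonzero Laurent polynomial cannot vanish on all of $(\C^*)^2$), and invoke the Nullstellensatz to match closed points with $V(\ker\phi)=\overline{\mathrm{im}\,\Phi}=X$. The paper itself offers no argument here — it simply cites \cite{CLS}, Chapter 1 — and your write-up is exactly the standard proof found in that reference, so there is nothing to add.
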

\begin{proof}
See \cite{CLS}, Chapter 1, Section 1.
\end{proof}

We are now ready to prove the following theorem.

\begin{teo}
Let $\F$ be the field of rational functions of a toric surface. Let $\nu:\F\rightarrow\Gamma$ be any
valuation centered on the toric surface and such that $\nu(x_1)\neq\lambda\nu(x_2)$ for all $\lambda\in\R\setminus\Q$. 
Then a finite iteration of Nash modification gives local uniformization along $\nu$, i. e., the center of the valuation
after those iterations is non-singular.
\end{teo}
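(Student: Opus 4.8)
The plan is to reduce the theorem to Theorem \ref{t. main theorem} by translating the valuation-theoretic data into the combinatorial setup of Section 1. First I would recall the classification in Proposition \ref{p. classify groups}: the value group $\Gamma$ is either a subgroup of $\Q$, or $\Z^2_{lex}$, or $\Z+\beta\Z$ with $\beta$ irrational. The hypothesis $\nu(x_1)\neq\lambda\nu(x_2)$ for $\lambda\in\R\setminus\Q$ rules out precisely the last case when $\nu$ has real rank $1$ and rational rank $2$ with an irrational ratio between $\nu(x_1)$ and $\nu(x_2)$; so in all remaining cases I claim there exist integers $a,b$ with $(a,b)=1$ such that the linear form $L(m_1,m_2)=am_1+bm_2$ computes $\nu$ on monomials, in the sense that $\nu(x^{\gamma})=L(\gamma)\cdot\nu_0$ for a fixed positive element $\nu_0$ (for rank-one $\nu$), or that $L$ records the leading component of $\nu(x^{\gamma})$ in $\Z^2_{lex}$. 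After composing with a monomial isomorphism I may assume $L(\xi)\geq 0$, so $L$ is exactly a linear form of the type allowed in (B1).

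Next I would show that this $L$ governs which affine chart of the Nash modification contains the center of $\nu$. Since $\nu$ is centered on $X$ and Nash modification is a proper birational morphism, Proposition \ref{p. center birational} guarantees that $\nu$ has a center on $X^*$, hence on exactly one of the affine charts described by the sets $\xi_{i,j}$ with $\{i,j\}\in S'$. The key point is that the chart containing the center is one of the charts chosen by $L$ in the sense of (B2): the center of $\nu$ lies in the chart whose monomials have nonnegative $\nu$-value, and a direct valuation computation on the standard affine piece shows that requiring all the new monomial exponents $\gamma_k-\gamma_{i_0}$ and $\gamma_k-\gamma_{j_0}$ to have nonnegative $L$-value forces $\gamma_{i_0},\gamma_{j_0}$ to be $L$-minimal in the way demanded by (B2). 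I would spell this out using the explicit description of the charts from \cite{GM} recalled in (A1)--(A4), checking that the chart-selection condition "$x^{\gamma}$ belongs to the local ring at the center" is equivalent to "$L(\gamma)\geq 0$ on the new generating set," which is how (B2) is phrased.

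Having identified a branch of the combinatorial tree — the one determined by always choosing the couple selected by $L$ — that tracks the successive centers of $\nu$, I would invoke Theorem \ref{t. main theorem}: this branch terminates after finitely many steps in a set $\xi'$ whose semigroup $\Z_{\geq 0}\xi'$ is generated by two elements, i.e.\ in a smooth affine toric surface. The center of $\nu$ after that many iterations of Nash modification then lies on a non-singular variety, which is exactly local uniformization along $\nu$. I should also address the one subtlety flagged in the text: $L$ need not choose the couple uniquely, so there may be several candidate charts containing the center; but since every branch determined by $L$ terminates (Theorem \ref{t. main theorem} handles all of them) and the bound of Section 4 is uniform over these choices, the argument goes through regardless of which admissible chart the center actually sits in.

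The main obstacle I expect is the second step: making precise and fully rigorous the correspondence "center of $\nu$ after Nash modification $\longleftrightarrow$ chart chosen by $L$." This requires carefully matching the scheme-theoretic description of the center (the prime ideal $R\cap\mathfrak m$ in each candidate affine chart's coordinate ring) with the combinatorial condition (B2) on $L$-minimality, and in particular verifying that the valuation's center cannot escape to a non-selected chart and that the condition $(0,0)\notin\Co(\xi_{i,j})$ is automatically met along this branch. The rank-two lexicographic case needs slightly more care, since there one must argue that the first coordinate of $\nu$ alone suffices to steer the center for as long as the algorithm runs, with the second coordinate only breaking ties — but this too is encoded in the freedom (B2) leaves in the choice of the couple.
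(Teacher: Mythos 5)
Your proposal is correct and follows essentially the same route as the paper: classify the value group via Proposition \ref{p. classify groups}, read off a rational linear form $L$ from $\nu(x_1),\nu(x_2)$ (taking only the leading lexicographic component in the $\Z^2_{lex}$ case), use Proposition \ref{p. center birational} plus the nonnegativity of $\nu$ on the coordinate ring of the chart containing the center to show that chart is one chosen by $L$ in the sense of (B2), and conclude by Theorem \ref{t. main theorem}. The one cosmetic difference is that $L(\xi)\geq 0$ already follows from $\nu$ being centered on $X$, with no monomial change of coordinates needed.
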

\begin{proof}
According to the hypothesis on $\nu$, its possible groups of values are those of (1) and (2) of proposition
\ref{p. classify groups}. Let $\xi=\{\gamma_1,\ldots,\gamma_r\}\subset\Z^2$ be a set of monomial exponents of
the toric variety.
\begin{itemize}
\item[(i)]Consider any valuation $\nu:\F\rightarrow\Q$ centered on $X$ and such that $\nu(x_1)=a$,
$\nu(x_2)=b$. Let $L(t_1,t_2)=at_1+bt_2$. Then $\nu(x^{\gamma_i})=L(\gamma_i)$, and since 
$\nu$ is centered on $X$, we have $L(\xi)\geq0$. After applying Nash modification to $X$, we look
at the affine charts containing the center of $\nu$ (such charts exist according to proposition
\ref{p. center birational}). Suppose that $X'$ is one of these charts. Then we assert that $X'=X_{\xi_{i_0,j_0}}$, 
where the couple $(i_0,j_0)$ is one of the possible choices of $L$. Indeed, the affine charts of the Nash modification of $X$
are of the form $X'=X_{\xi_{i,j}}$ for some $i$, $j$ such that $(0,0)\notin\Co(\xi_{i,j})$. Since $\nu$ is centered on $X'$ 
we have $0\leq\nu(x^{\gamma_k-\gamma_i})=L(\gamma_k-\gamma_i)$ and $0\leq\nu(x^{\gamma_k-\gamma_j})=L(\gamma_k-\gamma_j)$ 
whenever $\gamma_k-\gamma_i$ or $\gamma_k-\gamma_j$ belong to $\xi_{i,j}$ according to (A2) of the algorithm. Assume 
that $L(\gamma_i)\leq L(\gamma_j)$. Then $\gamma_i$, $\gamma_j$ are two elements of $\xi$ such that 
$L(\gamma_i)\leq L(\gamma_k)$ for all $k$, $L(\gamma_j)\leq L(\gamma_k)$ for all $k$ such that 
$\det(\gamma_i\mbox{ }\gamma_k)\neq0$, and also such that $(0,0)\notin\Co(\xi_{i,j})$. This means that $\{\gamma_i,\gamma_j\}$
is one of the possible choices of $L$.
\item[(ii)]Now consider any valuation $\nu:\F\rightarrow\Z^2_{lex}$ centered on $X$ and such that $\nu(x_1)=(a,c)$,
$\nu(x_2)=(b,d)$ with $(a,b)\neq q(c,d)$ for all $q\in\Q$. Let $L(t_1,t_2)=at_1+bt_2$ and $T(t_1,t_2)=ct_1+dt_2$.
As before, $(0,0)\leq\nu(x^{\gamma_i})=(L(\gamma_i),T(\gamma_i))$. In particular, $0\leq L(\xi)$. Arguing as 
in (i), we see that if $X'=X_{\xi_{i,j}}$ is an affine chart of the Nash modification of $X$ in which $\nu$ is centered,
then $\{\gamma_i,\gamma_j\}$ is a possible choice of $L$.
\end{itemize}
Now, by theorem \ref{t. main theorem}, the branches determined by $L$ in the iteration of Nash modification are finite and 
they end in a non-singular surface. In particular, the centers of the valuations considered in (i) and (ii) after these iterations 
are non-singular, that is, this process gives local uniformization along $\nu$.
\end{proof}




\end{document}